\newtheorem{thm}{Theorem}[section]
\newtheorem{cor}[thm]{Corollary}
\newtheorem{prop}[thm]{Proposition}
\theoremstyle{plain}
\newtheorem{theo}[thm]{Theorem}
\newtheorem{lem}[thm]{Lemma}
\theoremstyle{definition}
\newtheorem{obs}{Observation}
\newtheorem{rem}[thm]{Remark}
\numberwithin{equation}{section}
\def\sq{\square}
\def\zz{\mathbb Z}
\def\nn{\mathbb N}
\def\rr{\mathbb R}
\def\ga{\gamma}
\def\de{\delta}
\def\ep{\epsilon}
\def\eps{\epsilon}
\def\al{\alpha}
\def\be{\beta}
\def\ssu{\subset}
\def\<{\langle}
\def\>{\rangle}
\def\Z{ {\text {\rm Z} } }
\def\rE{{\text {\rm E} } }
\def\Q{{\text {\rm Q} } }
\def\0{{\mathbf 0}}
\def\NN{{\mathbb N}}
\def\.{\hskip.06cm}
\def\ts{\hskip.03cm}
\def\conv{{\text {\rm {conv}} }}
\def\bz{{\textbf{z}}}
\def\poly{\textup{\textsf{P}}}
\def\SP{{\textup{\textsf{\#P}}}}
\def\FPPo{{\textup{\textsf{FP/poly}}}}
\def\SigmaP{\boldsymbol{\Sigma}^{\poly}}
\def\PiP{\boldsymbol{\Pi}^{\poly}}
\def\sharpP{\textup{\textsf{\#P}}}
\def\Z{\mathbb{Z}}
\def\R{\mathbb{R}}
\def\Q{\mathbb{Q}}
\newcommand{\cj}[1]{\overline{#1}}
\renewcommand{\b}{\cj{b}}
\def\a{\cj{a}}
\newcommand{\x}{\mathbf{x}}
\renewcommand{\t}{\mathbf{t}}
\renewcommand{\u}{\mathbf{u}}
\def\aalpha{\boldsymbol{\alpha}}
\newcommand{\y}{\mathbf{y}}
\newcommand{\z}{\mathbf{z}}
\newcommand{\floor}[1]{\lfloor#1\rfloor}
\newcommand{\ex}{\exists\ts}
\renewcommand{\for}{\forall\ts}
\def\nin{\noindent}
\def\NP{{\textup{\textsf{NP}}}}
\newcommand{\cpl}{\ts\backslash\ts}
\def\v{\mathbf{v}}
\def\w{\mathbf{w}}
\newcommand\qrem[1]{\{\!\{ #1 \}\!\}}
\def\fib{\phi}
\def\Fib{\Phi}
\def\chain{{\mathcal C}}
\def\bprime{\cj{b'}}
\renewcommand{\problem}[1]{\textsc{#1}}
\newcommand{\problemdef}[3]{
\bigskip
\begin{tabular}{p{0.1\textwidth} p{0.8\textwidth}}
\multicolumn{2}{l}{\problem{#1}}\\
\textbf{Input:} & #2 \\
\textbf{Decide:} & #3
\end{tabular}
\bigskip
}
\newcommand{\countingdef}[3]{
\bigskip
\begin{tabular}{p{0.1\textwidth} p{0.8\textwidth}}
\multicolumn{2}{l}{\problem{#1}}\\
\textbf{Input:} & #2 \\
\textbf{Output:} & #3
\end{tabular}
\bigskip
}
\def\Boo{\Psi}
\title{The computational complexity of integer programming with alternations}
\author[Danny Nguyen \and Igor Pak]{Danny Nguyen$^{\star}$ \and Igor~Pak$^{\star}$}
\thanks{\thinspace ${\hspace{-.45ex}}^\star$Department of Mathematics,
UCLA, Los Angeles, CA, 90095.
\hskip.06cm
Email:
\hskip.06cm
\texttt{\{ldnguyen,\ts{pak}\}@math.ucla.edu}}
\thanks{
\today}
\begin{document}
\maketitle

\begin{abstract}
We prove that integer programming with three alternating quantifiers is $\NP$-complete, even for a fixed number of variables.
This complements earlier results by Lenstra and Kannan, which together say that integer programming with at most two
alternating quantifiers can be done in polynomial time for a fixed number of variables.  As a byproduct of the proof,
we show that for two polytopes $P,Q \subset \R^{3}$, counting the projection of integer points in $Q \cpl P$ is $\sharpP$-complete.
This contrasts the 2003 result by Barvinok and Woods, which allows counting in polynomial time the projection of
integer points in  $P$ and $Q$ separately.
\end{abstract}

\vskip1.5cm

\section{Introduction}

\subsection{Background}


In a pioneer paper~\cite{L}, Lenstra showed that Integer Programming
in a bounded dimension can be solved in polynomial time.  The next breakthrough
was obtained by Kannan in 1990 and until recently remained the most general
result in this direction (see~\cite{E2}).

\begin{theo}[Parametric Integer Programming~\cite{K1}]\label{th:Kannan}
Fix $d_{1}$ and $d_{2}$.
Given a polyhedron $P \subseteq \R^{d_{1}}$, a matrix $A \in \Z^{m \times (d_{1}+d_{2})}$ and a vector $\b \in \Z^{m}$, the following sentence can be decided in polynomial time:
\begin{equation}\label{eq:Kannan}
\for \x \in P \cap \Z^{d_{1}} \quad \ex \y \in \Z^{d_{2}}  \quad : \quad A \, (\x,\y) \le \b.
\end{equation}
Here $P$ is given by a system $C \, \x \le \cj \ga$, with $C \in \Z^{n \times d_{1}}$ and $\cj \ga \in \Z^{n}$.
The numbers $m,n$ are part of the input.
\end{theo}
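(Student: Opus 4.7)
The plan is to proceed by induction on the inner dimension $d_2$, reducing a parametric integer programming instance in dimension $d_2$ to polynomially many parametric integer programming instances in dimension $d_2-1$. Write $A = [A_1 \mid A_2]$ with $A_i \in \Z^{m \times d_i}$, and let $Q_\x := \{\y \in \R^{d_2} : A_2 \y \le \b - A_1 \x\}$, so that \eqref{eq:Kannan} asserts $Q_\x \cap \Z^{d_2} \ne \emptyset$ for every $\x \in P \cap \Z^{d_1}$.

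\smallskip

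\textbf{Base case $d_2 = 0$.} The sentence reduces to ``$A_1 \x \le \b$ holds for every $\x \in P \cap \Z^{d_1}$,'' whose negation is standard integer feasibility in fixed dimension $d_1$, decidable in polynomial time by Lenstra's theorem~\cite{L}.

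\smallskip

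\textbf{Inductive step.} I would first use parametric linear programming in fixed dimension to partition $P$ into a polynomial number of rational subpolytopes on which the combinatorial type of $Q_\x$ is constant. On pieces where $Q_\x = \emptyset$, Lenstra's algorithm either finds an integer $\x$ there (refuting \eqref{eq:Kannan}) or certifies there is none. On the remaining pieces (where $Q_\x \ne \emptyset$), the key tool is Khinchine's flatness theorem: if $Q_\x \cap \Z^{d_2} = \emptyset$ then there exists a nonzero $\v \in \Z^{d_2}$ with $\width_\v(Q_\x) \le c(d_2)$ for a constant depending only on the fixed dimension. An LLL-type reduction in fixed dimension allows us to assume $\|\v\|$ polynomial in the input. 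For each candidate $\v$, use parametric LP again to describe the subregion $R_\v \subseteq P$ on which $\width_\v(Q_\x) \le c(d_2)$; on $R_\v$, the integer values $k$ attained by $\v^T \y$ form a set of size $\le c(d_2)+1$, and each slice $\{\y \in Q_\x : \v^T \y = k\}$ is a parametric polytope in an affine subspace of dimension $d_2 - 1$. A unimodular change of coordinates then puts each slice in the form of a parametric IP problem in inner dimension $d_2-1$, to which the inductive hypothesis applies. The overall sentence is true if and only if \emph{every} branch of this decomposition returns ``true.''

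\smallskip

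\textbf{Main obstacle.} The delicate part is the quantitative bookkeeping: at every level of the recursion, parametric LP subdivides $P$, polynomially many flatness directions $\v$ are enumerated, and $O_{d_2}(1)$ slice values $k$ are chosen. One must show that this blowup is polynomial in the input per level and that the depth $d_2$ of the recursion (a constant) then gives a polynomial total number of subproblems. This relies on the Khinchine constant $c(d_2)$ and the cardinality of the candidate direction set being functions of $d_2$ alone, and on a polynomial bound for the size of the parametric LP subdivision; neither is trivial, and ensuring that the search for a short flatness direction $\v$ can be executed uniformly across the parameter region $R_\v$ (rather than pointwise in $\x$) is the technical heart of the argument.
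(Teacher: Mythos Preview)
This theorem is not proved in the paper at all: it is stated as a known result of Kannan~\cite{K1} and used as a black box throughout. There is therefore no ``paper's own proof'' to compare your proposal against. The only related material appears in \S\ref{ss:finrem-doignon}, where the authors remark (without full details) that Theorem~\ref{th:Kannan} reduces to the special case of bounded $m$ and $n$ via the Doignon--Bell--Scarf theorem and triangulation of~$P$; but that discussion presupposes Kannan's result for bounded parameters rather than reproving it.

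Your outline is a reasonable high-level sketch of Kannan's original argument (flatness theorem, induction on the inner dimension, parametric control of the flat direction), and you have correctly identified the genuine difficulty: one must find a \emph{single} flatness direction, or a polynomial-size family of them, that works uniformly across a parametric region of~$\x$, rather than pointwise. Kannan handles this with a more delicate structure (test sets and a careful analysis of how the LLL-reduced basis of the lattice associated to $Q_\x$ varies with~$\x$), and the bookkeeping is substantially more intricate than your sketch suggests. So your plan is in the right spirit but would need significant technical work to become a proof; in any case, the present paper simply cites the result.
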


In~\cite{K2}, Kannan asked if
Theorem~\ref{th:Kannan} can be extended to three alternating quantifiers.
We give an answer in the negative direction to this question:

\begin{theo}\label{th:main_1}
Fix $d_{1} \ge 1, d_{2} \ge 2$ and $d_{3} \ge 3$.
Given two polyhedra $P \subseteq \R^{d_{1}}$, $Q \subseteq \R^{d_{2}}$, a matrix
$A \in \Z^{m \times (d_{1}+d_{2}+d_{3})}$ and a vector $\b \in \Z^{m}$,
then deciding the sentence
\begin{equation}\label{eq:main_1}
\ex \x \in P \cap \Z^{d_{1}} \quad \for \y \in Q \cap \Z^{d_{2}} \quad \ex \z \in \Z^{d_{3}} \quad : \quad A \, (\x,\y,\z) \le \b
\end{equation}
is an $\NP$-complete problem.
Here $P$ and $Q$ are given by two systems $C \, \x \le \cj\ga$ and $D \, \y \le \cj\de$, with $C \in \Z^{n \times d_{1}}$, $\cj\ga \in \Z^{n}$, $D \in \Z^{q \times d_{2}}$, and $\cj\de \in \Z^{q}$.
\end{theo}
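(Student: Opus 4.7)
My plan splits into membership in $\NP$ and $\NP$-hardness.

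\medskip
\textbf{Membership in $\NP$.} I would guess the outer witness $\x$ and then verify the remaining subsentence via Theorem~\ref{th:Kannan}. The first step is a polynomial bit-size bound for a smallest valid $\x$: if \eqref{eq:main_1} holds, then some $\x \in P \cap \Z^{d_1}$ of polynomial bit-size already witnesses it. This follows from Lenstra-type bounds on short integer vectors in polytopes of fixed dimension, together with the observation that the inner predicate depends on $\x$ only through the fixed number of linear forms appearing as the first $d_1$ columns of $A$, so only polynomially many residue classes of $\x$ modulo a suitable lattice are relevant. After guessing such an $\x$, the remaining sentence
\[
\for \y \in Q \cap \Z^{d_2} \quad \ex \z \in \Z^{d_3} \quad : \quad A\,(\x,\y,\z) \le \b
\]
matches Theorem~\ref{th:Kannan} verbatim and can be decided in polynomial time. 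Composing these steps places the problem in $\NP$.

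\medskip
\textbf{$\NP$-hardness.} I would reduce from \problem{Subset Sum}: given $a_1,\dots,a_n,t \in \N$, decide whether some $S \subseteq \{1,\dots,n\}$ has $\sum_{i \in S} a_i = t$. The central challenge is packing an $n$-number input into a constant number of integer coordinates. My approach: the outer existential $x$ (using $d_1 = 1$) encodes a candidate subset $S$ via its binary digits $b_1,\dots,b_n$; the universal $\y$ ranges over a bit index $i$ paired with a marker intended to be $2^i$ (using $d_2 = 2$); and the innermost $\z$ supplies the local certificates, namely a quotient-remainder decomposition $x = q \cdot 2^i + r$, the claimed bit value $b_i$, and bookkeeping for a partial sum that must equal $t$ at $i = n$ (fitting into $d_3 = 3$ with some care). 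The universal quantifier then forces the encoded subset to pass the check at every position, so the sentence holds iff \problem{Subset Sum} has a solution.

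\medskip
\textbf{Main obstacle.} The core difficulty is enforcing that the marker coordinate of $\y$ really equals $2^{y_1}$, since $\{(i, 2^i) : i \in \N\}$ is not semilinear and cannot form the polytope $Q$ directly. I would let $\y$ range over a generous linear set and force the doubling law $2^{i+1} = 2 \cdot 2^i$ via adjacency certificates provided by $\z$; because the doubling check is inside $\for \y$, the entire geometric sequence $1, 2, 4, \dots, 2^n$ is pinned down, and pairs that fail the relation cause the inner $\ex \z$ to fail vacuously so that the outer condition is unaffected. Wiring these gadgets together cleanly, so that the three quantifier layers realize subset-sum while respecting the minimal dimensions $d_1 = 1$, $d_2 = 2$, $d_3 = 3$ and not admitting spurious witnesses, is where the bulk of the technical work lies. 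This is exactly the kind of projection-based encoding that underlies the $\sharpP$-completeness byproduct mentioned in the abstract, so I would expect parallel Frobenius-style and continued-fraction constructions from earlier Nguyen--Pak work to govern both arguments.
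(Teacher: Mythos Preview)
Your membership argument is fine and matches the paper's: guess a polynomially bounded $\x$ and verify the residual $\for\ex$ sentence via Theorem~\ref{th:Kannan}.

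The hardness reduction, however, has a structural gap. In the shape $\ex x\,\for\y\,\ex\z$, the inner system must be satisfiable for \emph{every} $\y\in Q\cap\Z^{d_2}$; if it fails for even one such $\y$, the whole sentence is false. So for each $\y$ \emph{not} of the intended form $(i,2^i)$ you must make the inner $\ex\z$ \emph{trivially true}, not ``fail vacuously'' as you write. You cannot achieve this with ``adjacency certificates provided by $\z$'': the witness $\z$ is chosen after $\y$ is fixed and can certify only facts about that single $\y$, never a relation between $\y$ and its neighbour $(y_1+1,2y_2)$. What is actually required is a description of the \emph{complement} of $\{(i,2^i):0\le i\le n\}$ inside the box $Q$ by a bounded number of convex pieces, so that ``$\y$ is bad'' can be absorbed into the single linear system. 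A second, independent problem is the partial sum: the Subset-Sum constraint $\sum_i b_ia_i=t$ is global, while $\for\y\,\ex\z$ tests each $\y$ in isolation; threading a running total forces you to store all partial sums in $x$ and extract two adjacent blocks at each $\y$, which does not fit in $d_3=3$.

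The paper avoids both obstacles by reducing from Lagarias's \textsc{GSA}, which is already of the form $\ex x\,\for i\,\ex w$ with \emph{independent} per-index checks $\qrem{\alpha_i x}\le\varepsilon$. The indices are placed at the Fibonacci points $\fib_i=(F_{2i-1},F_{2i-2})$, and the key geometric fact---specific to Fibonacci numbers---is that the complement of $\{\fib_1,\dots,\fib_d\}$ in its bounding box is exactly the integer points of two convex polygons $R_1,R_2$ (properties (F3)--(F5)). A compression lemma then merges the resulting three polytopes into one using two extra $\z$-coordinates, yielding $d_1=1,\ d_2=2,\ d_3=3$. Your route would need an analogue of this complement decomposition for the curve $(i,2^i)$, and even then the global-sum issue remains.
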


Let us emphasize that in both Theorem~\ref{th:Kannan} and~\ref{th:main_1}, there is no bound on the number of inequalities invovled.
In other words, the parameters $m,n$ and $q$ are \emph{not} fixed.
Theorem~\ref{th:main_1} is especially surprising for the following reasons.
First, in~\cite{short_presburger}, we gave strong evidence that~\eqref{eq:main_1} is decidable in polynomial time if $m,n$ and $q$ are fixed.
Second, by an easy application of the Doignon--Bell--Scarf theorem,~\eqref{eq:Kannan} is polynomial time reducible to the case with $m$ and $n$ fixed.
Unfortunately, this simple reduction breaks down when there are more than two quantifiers (see Section~\ref{ss:finrem-doignon}) as in~\eqref{eq:main_1}.
Still, in~\cite{short_presburger}, we speculated that a more involved reduction argument might still apply to~\eqref{eq:main_1}.
Theorem~\ref{th:main_1} refutes the possibility of any reduction from~\eqref{eq:main_1} to an easier form with $m,n$ and $q$ bounded for which decision could be in polynomial time, unless $\poly = \NP$.
In fact, Theorem~\ref{th:main_1} holds even when $P$ is an interval and
$Q$ is an axis-parallel rectangles
(see Theorem~\ref{th:main_1_restated} and $\S$\ref{ss:finrem-dim}).
Thus, the problem~\eqref{eq:main_1} is already hard when $n,q$ are fixed and only $m$ is unbounded.

\medskip

In~\cite{S}, Sch\"{o}ning proved that it is $\NP$-complete to decide whether
\begin{equation}\label{eq:Schoning}
\ex x \in \Z \quad \for y \in \Z  \quad : \quad \Boo(x,y).
\end{equation}
Compared to~\eqref{eq:main_1}, this has only two quantifiers. However, here the expression $\Boo(x,y)$ is allowed to contain both conjunctions and disjunctions of many inequalities.
So Theorem~\ref{th:main_1} tells us that disjunctions can be discarded at the cost of adding one extra alternation.
In the next subsection, we generalize this observation.

On can also consider a ``hybrid'' version of~\eqref{eq:main_1} and~\eqref{eq:Schoning} with only $2$ quantifiers $\ex\for$ and only $2$ disjunctions in $\Psi$.
In Section~\ref{sec:2_quant}, we show this is still $\NP$-complete to decide.

\subsection{Presburger sentences}

In~\cite{G}, Gr\"{a}del considered the theory of \emph{Presburger Arithmetic},
and proved many completeness results in this theory when the number of variables and quantifiers are bounded.
Those results were later strengthened by Sch\"{o}ning in~\cite{S}.
They can be summed up as follows:

\begin{theo}[\cite{S}]\label{th:Schoning}
Fix $k \ge 1$.
Let $\Boo(\x,\y)$ be a Boolean combination of linear inequalities with integer coefficients in the variables $\x = (x_{1},\dots,x_{k}) \in \Z^{k}$ and $\y = (y_{1},\dots,y_{3}) \in \Z^{3}$.
Then deciding the sentence
\begin{equation*}
Q_{1} \, x_{1} \in \Z  \quad \dots \quad Q_{k} \, x_{k} \in \Z  \quad Q_{k+1} \, \y \in \zz^{3} \quad : \quad \Boo(\x,\y)
\end{equation*}
is $\SigmaP_{k}$-complete if  $Q_{1} = \ex$, and $\PiP_{k}$-complete if $Q_{1} = \for$.
Here $Q_{1},\dots,Q_{k+1} \in \{\for,\ex\}$ are $m+1$ alternating quantifiers.
\end{theo}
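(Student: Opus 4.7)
For the upper bound, the plan is to show that the innermost block $Q_{k+1} \ts \y \in \Z^3 : \Boo(\x, \y)$ can be decided in polynomial time for any fixed $\x$. The hyperplane arrangement cut out in $\R^3$ by the $m$ linear inequalities appearing in $\Boo$ has $O(m^3)$ cells, and $\Boo$ is constant on each cell (as a Boolean function of inequality signs), so one enumerates cells and tests integer feasibility of each three-dimensional polyhedron in polynomial time via Lenstra's algorithm. Coupled with standard Presburger quantifier elimination bounds that restrict each witness $x_i$ to a polynomial-length binary representation when the sentence is true, the outer $k$ alternating quantifiers over this $\P$-predicate place the sentence in $\SigmaP_k$ when $Q_1 = \ex$, and dually $\PiP_k$ when $Q_1 = \for$.

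For hardness, the plan is to reduce from $\SigmaP_k$-SAT (resp.\ $\PiP_k$-SAT). Given a QBF $Q_1' \ts \u^{(1)} \ldots Q_k' \ts \u^{(k)} : \phi(\u^{(1)}, \ldots, \u^{(k)})$ with $\u^{(i)} \in \{0,1\}^{n_i}$, encode each Boolean block $\u^{(i)}$ as a single integer $x_i \in [0, 2^{n_i})$ whose binary digits carry the Boolean values. The reduction produces the Presburger sentence $Q_1 \ts x_1 \in \Z \ldots Q_k \ts x_k \in \Z \ Q_{k+1} \ts \y \in \Z^3 : \Boo'(\x, \y)$, where $\Boo'$ enforces the range constraints $0 \le x_i < 2^{n_i}$ (handled via case splits appropriate to $Q_i$'s direction) and encodes satisfaction of $\phi$ evaluated on the bits of $x_1, \ldots, x_k$.

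The main obstacle is implementing bit extraction and propositional verification within a Boolean combination of linear inequalities using only the three auxiliary variables $\y = (y_1, y_2, y_3)$, consistent with the direction of $Q_{k+1}$. Put $\phi$ in CNF when $Q_{k+1} = \for$ and in DNF when $Q_{k+1} = \ex$; then $y_1$ indexes a clause of $\phi$, $y_2$ selects a literal within a clause, and $y_3$ serves as a bit-extraction helper. The atomic statement ``bit $j$ of $x_i$ equals $b$'' is expressed, in the existential case, by the linear system $2^j y_3 + 2^{j-1} b \le x_i < 2^j y_3 + 2^{j-1}(b+1)$, and in the universal case by the equivalent statement $\for y_3 : [2^j y_3 \le x_i < 2^j(y_3+1)] \To [x_i - 2^j y_3 \ge 2^{j-1} b]$, which expands to a disjunction of three linear inequalities. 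Enumeration over $(i, j, b)$ and over clauses/literals is handled by disjunctions in $\Boo'$, producing a Boolean combination of size polynomial in $|\phi|$. Verifying that its truth under the alternating quantifier structure matches satisfiability of the original QBF completes the reduction and yields the claimed $\SigmaP_k$- and $\PiP_k$-completeness.
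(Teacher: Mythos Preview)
This theorem is not proved in the paper; it is quoted from Sch\"{o}ning~\cite{S} (building on Gr\"{a}del~\cite{G}) and used only as background for Theorem~\ref{th:main_2}. So there is no ``paper's own proof'' to compare against.

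That said, your sketch is a faithful reconstruction of the Gr\"{a}del--Sch\"{o}ning argument: the upper bound via the cell decomposition of the hyperplane arrangement in $\R^3$ combined with Lenstra's algorithm, and the lower bound via binary encoding of each Boolean block $\u^{(i)}$ into a single integer $x_i$ with a small number of auxiliary variables handling clause/literal indexing and bit extraction. One point worth tightening is the case analysis on the direction of $Q_{k+1}$: when $Q_{k+1}=\ex$ and $\phi$ is in DNF, a single term is a conjunction of literals, each needing its own bit-extraction witness, so you should say explicitly how one helper variable $y_3$ suffices (the standard trick is that $y_2$ ranges over the literals of the chosen term and the statement becomes ``for the selected literal the bit matches,'' dualizing the CNF case). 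Also, your bit-extraction formula $2^j y_3 + 2^{j-1}b \le x_i < 2^j y_3 + 2^{j-1}(b{+}1)$ as written constrains only bit $j{-}1$ and leaves the higher bits of $x_i - 2^j y_3$ free; you need either $0 \le x_i - 2^j y_3 < 2^j$ in addition, or the cleaner two-variable form used in~\cite{S}. These are repairable details; the overall strategy is correct.
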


This result characterizes the complexity of so called \emph{Presburger sentences} with $k+1$ quantifiers in a fixed number of variables.
The main difference between Presburger Arithmetic versus integer programming is that the expression $\Boo$ allows both conjunction and disjunction of many inequalities.
This flexibility allows effective reductions of classical decision problems such as $\textsc{QSAT}$.
For some time, it remains a question whether such reductions can be carried with only conjunctions, and at the same time keeping the number of variables fixed.
We prove the following result, which generalizes Theorem~\ref{th:main_1}:

\begin{theo}\label{th:main_2}
Integer programming in a fixed number of variables with $k+2$ alternating quantifiers
is $\SigmaP_{k}/\PiP_{k}$-complete, depending on whether $Q_{1} = \ex/\for$.
Here the problem is allowed to contain only a system of inequalities.
\end{theo}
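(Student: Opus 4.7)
The plan is to establish $\SigmaP_{k}/\PiP_{k}$-completeness in two halves. For the membership direction, note that the innermost two quantifiers of any $(k+2)$-quantifier alternating sentence form either a $\for\ex$ or an $\ex\for$ pattern in fixed dimension. By Theorem~\ref{th:Kannan}, applied directly or after negation, this inner pair is decidable in polynomial time as a function of the outer $k$ quantified variables. Treating it as a polynomial-time oracle, the remaining $k$ outer alternations place the sentence in $\SigmaP_{k}$ when $Q_{1} = \ex$ and in $\PiP_{k}$ when $Q_{1} = \for$.

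For the hardness direction, I would reduce from the $\SigmaP_{k}/\PiP_{k}$-complete problem of Sch\"{o}ning (Theorem~\ref{th:Schoning}), whose $(k+1)$-quantifier sentence carries a general Boolean combination $\Boo(\x,\y)$ of linear inequalities. The reduction keeps the first $k+1$ quantifiers verbatim and introduces a single new innermost quantifier $Q_{k+2}$ over a fresh integer block $\z$, whose role is to flatten $\Boo$ into a conjunction of inequalities $A(\x,\y,\z) \le \b$. This mirrors the base case (Theorem~\ref{th:main_1}), in which a two-quantifier $\ex\for$ Sch\"{o}ning sentence is converted into the three-quantifier $\ex\for\ex$ IP sentence of~\eqref{eq:main_1}; here the same transformation is applied while inheriting the $k-1$ unchanged outer alternations from Sch\"{o}ning's sentence as parameters.

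The main obstacle is the disjunction-elimination gadget in fixed dimension. Since $\Boo$ may have polynomially many disjuncts but the dimension of $\z$ is bounded, the choice of which disjunct in each clause to satisfy must be packed into the binary digits of a single component of $\z$, and extracting these bits with only a conjunction of linear inequalities is non-trivial. Moreover, the polarity of $Q_{k+2}$ is dictated by the alternation pattern, so one must first normalize $\Boo$ into CNF or DNF to match, and if the forced polarity does not match the one needed to eliminate disjunctions, negate the entire sentence and swap all quantifiers before applying the gadget. Once this parity issue is resolved, the bit-extraction construction from the proof of Theorem~\ref{th:main_1} carries over directly, and correctness of the reduction follows by replaying that argument with the $k-1$ outer parameters inserted.
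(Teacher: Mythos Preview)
Your membership argument is fine and agrees with the paper's. The hardness argument, however, rests on a misreading of how Theorem~\ref{th:main_1} is proved: that proof does not start from a Sch\"oning sentence and contains no bit-extraction gadget. It reduces from \textsc{GSA}, and its central device is the Fibonacci construction of Section~\ref{sec:geom-constr}, which converts a conjunction $\bigwedge_{i=1}^{d}\bigl[\,\exists w:(x,w)\in P_i\,\bigr]$ over polynomially many~$i$ into the \emph{two}-quantifier block $\forall\y\in J\ \exists\z:(x,\y,\z)\in U$, by placing each $P_i$ over the Fibonacci point~$\phi_i$ and taking a convex hull.

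Because that conversion costs two quantifiers, your plan of keeping Sch\"oning's $k{+}1$ quantifiers intact and flattening $\Psi$ with a single extra $Q_{k+2}$ cannot work in fixed dimension. Concretely, you would need the integer solution set of a Boolean combination with polynomially many disjuncts to equal the integer projection of one polytope with a bounded number of extra coordinates, and the tightness half of Proposition~\ref{p:log} shows this already fails for $r$ isolated points in convex position, which require $\lceil\log_2 r\rceil$ extra coordinates. Packing the disjunct index into the bits of a single integer does not help either: extracting each bit (as in~\eqref{eq:bit}) needs its own witness variable, so polynomially many bits cost polynomially many dimensions. The paper therefore reduces from $\textsc{Q3SAT}_k$ directly ($k$ quantifier blocks, not $k{+}1$): each Boolean block $\u_j\in\{0,1\}^\ell$ is encoded as a single integer $x_j$, and bit extraction is used \emph{here}, to read literals; each $3$-literal clause is compressed into one polytope $G_i$ via Lemma~\ref{lem:compress} (only $r=3$ disjuncts, so constant extra dimension); and the resulting conjunction $\bigwedge_i\bigl[\,\exists\w:(\x,\w)\in G_i\,\bigr]$ has exactly the shape~\eqref{eq:all_P_i}, to which the Fibonacci construction applies, yielding $k{+}2$ alternations in total.
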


We refer to Theorem~\ref{th:main_2_restated} for the precise statement.
Thus, we see that integer programming requires only one more quantifier alternation to achieve the same complexity as Presburger Arithmetic.
Again, we emphasize that while the number of variables and quantifiers are fixed in Theorem~\ref{th:main_2}, the linear system is still allowed many inequalities.


\subsection{Counting points in projections of non-convex polyhedra}
For polytopes in arbitrary dimension, counting the number of integer points points is classically $\sharpP$-complete,
even for 0/1~polytopes.  In a fixed dimension~$d$, Barvinok famously showed this can be done in
polynomial time:

\begin{theo}[\cite{B1}]\label{th:B}
Fix $d$. Given a polytope $P \subset \R^{d}$, the number of integer points in $P \cap \Z^{d}$ can be computed in polynomial time.
Here $P$ is described by a system $A\x \le \b$, with $A \in \Z^{m\times d},\, \b \in \Z^{m}$.
\end{theo}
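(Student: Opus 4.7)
The plan is to follow Barvinok's original generating function approach. For a rational polytope $P \subset \R^d$, define the Laurent generating function
\begin{equation*}
f(P;\x) \ := \ \sum_{\m \in P \cap \Z^d} \x^\m \ \in \ \Z[x_1^{\pm 1},\dots,x_d^{\pm 1}].
\end{equation*}
Clearly $|P \cap \Z^d| = f(P;\mathbf{1})$, but direct enumeration is infeasible. The first step is Brion's theorem, which expresses $f(P;\x)$ as the sum over vertices $v$ of the generating functions $f(K_v;\x)$ of the supporting tangent cones at $v$, each being a rational function of $\x$. Triangulating each $K_v$ reduces matters to counting in simplicial cones $K = v + \R_{\ge 0}\bbv_1 + \dots + \R_{\ge 0}\bbv_d$, whose generating function admits the closed form
\begin{equation*}
f(K;\x) \ = \ \frac{\x^v \, \sum_{\u \in \Pi_K \cap \Z^d} \x^\u}{\prod_{j=1}^d \bigl(1 - \x^{\bbv_j}\bigr)},
\end{equation*}
where $\Pi_K$ is the half-open fundamental parallelepiped of the generators $\bbv_j$.

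The main obstacle is that $|\Pi_K \cap \Z^d| = |\det(\bbv_1,\dots,\bbv_d)|$ can be exponential in the input size, so the numerator is not directly computable. Barvinok's key innovation is the signed \emph{unimodular decomposition} of a simplicial cone $K$ into cones $K_i$ of determinant $\pm 1$ (where $|\Pi_{K_i} \cap \Z^d| = 1$), via the identity $[K] = \sum_i \varepsilon_i [K_i] + (\text{lower-dimensional cones})$ modulo lines. To produce this decomposition, one chooses a short vector $\w$ in the parallelepiped of $K^*$ (found using the LLL algorithm) and replaces $K$ by cones whose generators include $\w$; the determinants shrink geometrically, and in fixed dimension $d$ the recursion depth is $O(\log \det K)$, yielding polynomially many unimodular cones in total. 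This is the step I expect to be delicate and where all the work lies: verifying that the signed identity holds, bounding the recursion depth (which depends on $d$ exponentially but is polynomial in the input length for fixed $d$), and handling the lower-dimensional cones that arise.

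Once every tangent cone $K_v$ has been written as a short signed sum of unimodular simplicial cones $K_{v,i}$, the generating function becomes a sum of polynomially many rational functions of the form $\x^{u_{v,i}}/\prod_j(1-\x^{\bbv_{v,i,j}})$. Finally, to recover $|P \cap \Z^d| = f(P;\mathbf{1})$, one cannot substitute $\x = \mathbf{1}$ directly because of poles in each summand (though the total sum is a polynomial at $\mathbf{1}$). The standard trick is to choose a generic integer vector $\bbw$ and restrict to the line $\x = e^{t\bbw}$, reducing the evaluation to a univariate limit $\lim_{t \to 0}$ computable by expanding each factor as a Laurent series in $t$ and extracting the constant term; since $d$ is fixed, only $O(1)$ Taylor coefficients are needed per summand, giving an overall polynomial time algorithm.
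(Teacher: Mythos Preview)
The paper does not give its own proof of this statement: Theorem~\ref{th:B} is quoted as a known result of Barvinok~\cite{B1} and is used as a black box. Your outline is a faithful high-level sketch of Barvinok's original argument (Brion's theorem, triangulation into simplicial cones, signed unimodular decomposition via short vectors from LLL, and the specialization $\x \to \mathbf{1}$ through a generic one-parameter substitution), so there is nothing to compare against in the present paper.
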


For a set $S \subset \R^{d}$, denote by $\rE(S):= S \cap \Z^{d}$.
The previous results say that $|\rE(P)|$ is computable in polynomial time.
Given two polytopes $P \subset Q \subset \R^{d}$, we clearly have $|\rE(Q \cpl P)| = |\rE(Q)| - |\rE(P)|$.
So the number of integer points in a complement can also be computed effectively.

\medskip

Theorem~\ref{th:B} was later generalized by Barvinok and Woods to count the
number of integer points in projections of polytopes:

\begin{theo}[\cite{BW}]\label{th:BW}
Fix $d_{1}$ and $d_{2}$.
Given a polytope $P \subset \R^{d_{1}}$, and a linear transformation $T : \Z^{d_{1}} \to \Z^{d_{2}}$,
the number of integer points in $T(P \cap \Z^{d_{1}})$ can be computed in polynomial time.
Here $P$ is described by a system $A\x \le \b$ and $T$ is described by a matrix $M$,
where $A \in \Z^{m\times d_{1}},\, \b \in \Z^{m}$ and $M \in \Z^{d_{2} \times d_{1}}$.
\end{theo}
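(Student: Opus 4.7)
The plan is to combine Barvinok's short-generating-function technology (which underlies Theorem~\ref{th:B}) with a ``project, then Booleanize'' step.

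First, I would encode $\rE(P)$ as a short rational generating function. By Barvinok's construction, one can compute in polynomial time an expression
\[
f(P; \x) \ = \ \sum_{i \in I} \, \alpha_i \, \frac{\x^{a_i}}{\prod_{j=1}^{s_i} \bigl(1 - \x^{b_{ij}}\bigr)}
\]
whose Laurent expansion at the origin equals $\sum_{\alpha \in \rE(P)} \x^\alpha$. Here $|I|$, the numbers $s_i \le d_1$, and the bit-sizes of $a_i, b_{ij}, \alpha_i$ are all polynomial in the input, for fixed $d_1$.

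Next, push $f$ through the linear map $T$ by performing the monomial substitution $x_i \mapsto \y^{m_i}$, where $m_i \in \Z^{d_2}$ is the $i$-th column of the matrix $M$ describing $T$. After a generic perturbation to keep each denominator from vanishing identically, this yields a short rational expression whose Laurent expansion is
\[
g(\y) \ = \ \sum_{\beta \in \Z^{d_2}} \bigl|\.T^{-1}(\beta) \cap \rE(P)\.\bigr| \, \y^\beta.
\]
So every lattice point of the projection $T(\rE(P))$ appears in $g$, but weighted by the size of its fiber, which is in general larger than one.

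The main obstacle, and the heart of the argument, is converting $g$ to its \emph{support} generating function
\[
h(\y) \ = \ \sum_{\beta \in T(\rE(P))} \y^\beta,
\]
in which each image point carries weight exactly $1$. This requires developing a Boolean calculus on short rational functions: intersections, unions, and complements of such functions can all be computed in polynomial time via a Hadamard-product construction, provided the ambient dimension is fixed, and the support of a short rational function can then be extracted by combining these operations with a disjointness-of-exponents trick. The polynomial bound here crucially needs $d_2$ to be fixed, since the Hadamard product complexity degrades sharply with dimension.

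Finally, to extract $|T(\rE(P))|$ from $h$, I would evaluate the short rational expression at $\y = \mathbf{1}$. Individual summands have poles there, so the evaluation is performed by the standard Todd-operator trick: pick a generic direction $u \in \R^{d_2}$, substitute $y_j = e^{\tau u_j}$, expand each summand as a Laurent series in $\tau$, and read off the constant coefficient of the total sum. The resulting integer is $|T(\rE(P))|$, and the complete pipeline runs in time polynomial in the bit-sizes of $A$, $\b$, and $M$ for fixed $d_1, d_2$.
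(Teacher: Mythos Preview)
The paper does not prove Theorem~\ref{th:BW}; it is quoted from Barvinok--Woods~\cite{BW} and used as a black box. So there is no ``paper's own proof'' to compare against. Your sketch is, at a high level, the actual Barvinok--Woods strategy: build a short rational generating function for $\rE(P)$, push it forward under the monomial map induced by~$T$, repair the multiplicities, and specialize.

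That said, your Step~3 is where essentially all of the work in~\cite{BW} lives, and your description does not yet contain the key idea. Boolean operations (via Hadamard products) act on \emph{indicator} generating functions, i.e.\ series with $0/1$ coefficients; your $g(\y)$ has coefficients equal to fiber sizes $|T^{-1}(\beta)\cap\rE(P)|$, which can be exponentially large in the bit-size of the input. Extracting the support of $g$ amounts to applying, coefficientwise, a polynomial that sends $0\mapsto 0$ and $1,2,\dots,N\mapsto 1$, where $N$ bounds the fiber size; realizing this as a combination of Hadamard powers costs time polynomial in~$N$, not in~$\log N$. The phrase ``disjointness-of-exponents trick'' does not cover this gap. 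What Barvinok and Woods actually do is project out one coordinate at a time and, at each stage, arrange (via a modular/translation argument exploiting the convexity of~$P$) that the relevant fibers have size bounded by a fixed constant depending only on the dimension, so that the interpolation step stays polynomial. Without that reduction to bounded fibers, the pipeline you describe is not polynomial time.
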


For a set $S \subset \R^{d}$, denote by $\rE_{1}(S)$ the projection of $S \cap \Z^{d}$ on the first coordinate, i.e.,
\begin{equation*}
\rE_{1}(S) \, := \, \{ x \in \Z \quad : \quad \ex \z \in \Z^{d-1} \quad (x,\z) \in S\}.
\end{equation*}
By Theorem~\ref{th:BW}, $|\rE_{1}(P)|$ can be computed in polynomial time for every polytope $P \subset \R^{d}$.

\medskip

We prove the following result:

\begin{theo}\label{th:main_3}
Given two polytopes $P \subset Q \subset \R^3$, computing $|\rE_{1}(Q \cpl P)|$ is $\sharpP$-complete.
\end{theo}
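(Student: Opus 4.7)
I will prove both membership in $\sharpP$ and $\sharpP$-hardness. Membership will come from applying Barvinok's algorithm (Theorem~\ref{th:B}) to the two-dimensional slices of $P$ and $Q$. Hardness will follow by reducing a standard $\sharpP$-complete counting problem (e.g.\ $\#\textsc{SubsetSum}$) to the counting of $\rE_1(Q\cpl P)$, exploiting the disjunctive structure inherent in a polytope difference.

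\textbf{Membership in $\sharpP$.} For each integer $x$ in the first-coordinate projection of $Q$ (an interval of polynomial bit length), form the two-dimensional slices
\begin{equation*}
Q_x \,:=\, \{(y,z) \in \R^2 \,:\, (x,y,z) \in Q\}, \qquad P_x \,:=\, \{(y,z) \in \R^2 \,:\, (x,y,z) \in P\}.
\end{equation*}
Since $P\subseteq Q$, we have $P_x \subseteq Q_x$, and both are polytopes in $\R^2$. By Theorem~\ref{th:B}, the cardinalities $|\rE(Q_x)|$ and $|\rE(P_x)|$ are computable in polynomial time, and $x \in \rE_1(Q\cpl P)$ iff $|\rE(Q_x)| > |\rE(P_x)|$. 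A nondeterministic polynomial-time machine that guesses $x$ (in canonical binary form of fixed length) and accepts iff this strict inequality holds therefore has exactly $|\rE_1(Q\cpl P)|$ accepting paths, placing the problem in $\sharpP$.

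\textbf{Hardness.} The structural identity behind the lower bound is
\begin{equation*}
\rE_1(Q\cpl P) \,=\, \bigcup_{i=1}^{m} \rE_1(Q \cap H_i),
\end{equation*}
where $H_i$ denotes the (closed) integer half-space opposite the $i$-th facet of $P$, so that each $Q \cap H_i$ is a polytope in $\R^3$. By Theorem~\ref{th:BW}, each $|\rE_1(Q \cap H_i)|$ is polynomial-time computable, but the cardinality of the union requires, in principle, inclusion--exclusion over $2^m$ subsets of facets --- and this is where $\sharpP$-hardness can be harvested. I would then reduce from $\#\textsc{SubsetSum}$: given $a_1,\dots,a_n, t \in \Z_+$, count the subsets $S \subseteq [n]$ with $\sum_{i\in S} a_i = t$. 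Each subset $S$ is encoded as a unique integer $x_S$ (e.g.\ its binary incidence $\sum_{i\in S} 2^{i-1}$), and $P\subset Q \subset \R^3$ are designed so that $x_S \in \rE_1(Q\cpl P)$ iff $S$ is a valid witness. The coordinates $y,z$ play the role of auxiliary witnesses --- one carrying the weighted sum $\sum a_i x_i$ and the other enforcing binary digit-extraction of $x$ via Frobenius-style gadgets in the spirit of short Presburger arithmetic --- while the $m=\mathrm{poly}(n)$ facets of $P$ supply the disjunctions needed for the bit-by-bit check.

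\textbf{Main obstacle.} The hard step is fitting the complete SubsetSum verification into only two auxiliary integer coordinates $y,z$ and a polynomial number of linear inequalities, while guaranteeing that each valid subset contributes exactly one $x$ to $\rE_1(Q\cpl P)$ and that no spurious $x$ appears. Theorem~\ref{th:BW} already exhausts what is tractable with a single projection, so the construction must genuinely exploit the non-convexity of $Q\cpl P$ --- its decomposition into the union $\bigcup_i (Q \cap H_i)$ --- to embed a $\sharpP$-hard counting problem. Once the polytopes are in place, verifying parsimony of the reduction (each SubsetSum solution contributes one $x$, and nothing extraneous does) completes the proof.
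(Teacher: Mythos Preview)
Your membership argument is fine. The hardness argument, however, is only a plan: you never actually construct $P$ and $Q$, and your ``main obstacle'' paragraph is an honest admission that the crucial step --- encoding bit extraction of $x$ together with the weighted sum $\sum a_i b_i$ using only two auxiliary integer coordinates and the facet structure of a single polytope difference --- has not been carried out. This is a real gap, not a routine detail. The expressive power of $Q\cpl P$ is that of ``$\exists (y,z)$ in a polytope such that at least one of $m$ linear inequalities holds'': a conjunction (from $Q$) wrapped around a single disjunction (from the facets of $P$). Verifying a \textsc{SubsetSum} witness requires a \emph{conjunction} of $n$ independent bit-checks, each of which already needs its own auxiliary integer to express $\lfloor x/2^{s-1}\rfloor$. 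It is not at all clear that this can be folded into two auxiliary variables and one layer of disjunction, and you give no construction doing so.

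The paper avoids this difficulty by reducing from $\#\textsc{GSA}$ rather than $\#\textsc{SubsetSum}$. The point is that $\textsc{GSA}$ already has the right shape: ``$x$ is good'' means $\bigwedge_{i=1}^{d}\bigl[\exists w:\ (x,w)\in P_i\bigr]$ for $d$ parallelograms $P_i\subset\R^2$, so ``$x$ is bad'' becomes $\bigvee_{i=1}^{d}\bigl[\exists w:\ (x,w)\in Q_i\bigr]$ for complementary parallelograms $Q_i$ (using that each vertical unit interval contains exactly one integer). This is a union of $d$ two-dimensional parallelograms, one per index $i$; the paper then embeds $Q_i$ into the plane $y=i$ in $\R^3$, translated vertically by carefully chosen $m_i$ satisfying a strict concavity condition, and shows that the resulting stack is exactly $V\cpl U$ for two explicit convex hulls $U\subset V\subset\R^3$. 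Thus $\#\textsc{GSA}=N-|\rE_1(V\cpl U)|$. No bit extraction is needed because each $\textsc{GSA}$ constraint is already a single linear condition on $x$; the third coordinate is used solely to index the $d$ constraints, not to compute anything about $x$.
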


In other words, it is $\sharpP$-complete to compute the size of the set
\begin{equation}\label{eq:main_3}
\rE_{1}(Q \cpl P) \. = \. \{x \in \Z \; : \; \ex \z \in \Z^{2} \quad (x,\z) \in Q \cpl P\}\ts.
\end{equation}
Note that the corresponding decision problem $|\rE_{1}(Q \cpl P)|\ge 1$ is equivalent to
$|\rE(Q \cpl P)|\ge 1$, and thus can be decided in polynomial time by applying Theorem~\ref{th:B}.

\medskip

The contrast between Theorem~\ref{th:BW} and our negative result can be explained as follows.
The proof Theorem~\ref{th:BW} depends on the polytopal structure of $P$ and exploited
convexity in a crucial way.  By taking the complement $Q \cpl P$, we no longer have a convex set.
In other words, we show that projection of the complement $Q \cpl P$ is complicated enough to
allow encoding of hard counting problems, even in $\rr^{3}$ (see also $\S$\ref{ss:finrem-diff}).

\begin{figure}[hbt]
\begin{center}
\psfrag{P}{$P$}
\psfrag{Q}{$Q$}
\epsfig{file=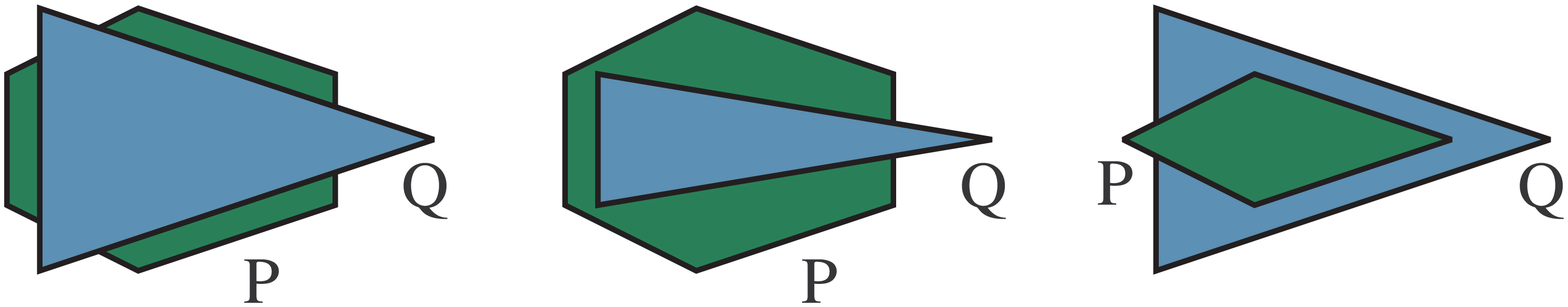, width=11.2cm}
\vskip-.15cm
\end{center}
\caption{Three examples of convex polygons $P,Q\ssu \rr^2$. }
\label{f:diff}
\end{figure}

\begin{rem}\label{rem:diff}
To understand the theorem, consider three examples of polygons $P,Q\ssu \rr^2$ as in Figure~\ref{f:diff}.
Note that the sets of integer points of the vertical projections of $P,Q$ and $P\cup Q$ are the same in all three cases,
but the sets number of integer points of the vertical projections of $Q\cpl P$ are quite different.
\end{rem}



As an easy consequence of Theorem~\ref{th:main_3} we obtain:

\begin{cor}\label{th:many_proj}
Given $r$ simplices $T_{1},\dots,T_{r} \subset \R^{3}$, computing $|\rE_{1}(T_{1} \cup\dots\cup T_{r})|$ is $\sharpP$-complete.
\end{cor}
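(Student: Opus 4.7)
The plan is to reduce from Theorem~\ref{th:main_3}: given polytopes $P \subset Q \subset \R^{3}$, I will produce simplices $T_{1},\dots,T_{r} \subset \R^{3}$ with $|\rE_{1}(T_{1} \cup \cdots \cup T_{r})| = |\rE_{1}(Q \cpl P)|$ in polynomial time. The natural construction is to triangulate the (non-convex) closed region $\cj{Q \cpl P} = Q \setminus \textup{int}(P)$ into simplices. In fixed dimension three this is standard: the hyperplane arrangement generated by the facets of $P$ and $Q$ partitions $\R^{3}$ into polynomially many convex cells; each is a convex polytope that triangulates into simplices; and one retains exactly those simplices lying in $Q \setminus \textup{int}(P)$.

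The one subtlety is that $\rE_{1}(\cj{Q \cpl P})$ can strictly contain $\rE_{1}(Q \cpl P)$, owing to integer points on $\partial P \cap Q$ that must not contribute. To circumvent this I first preprocess the input by a generic perturbation of $P$. Writing $P = \{\x : C \x \le \cj\ga\}$, set $P' := \{\x : C \x \le \cj\ga + \eps \, \mathbf{1}\}$ for a rational $\eps > 0$ of polynomial bit-length. Since the finitely many integer points outside $P$ in a bounding box for $Q$ are at distance at least $2^{-\polyin}$ from $\partial P$, a sufficiently small $\eps$ forces $P' \cap \Z^{3} = P \cap \Z^{3}$; taking $\eps$ generic (for instance $\eps = 1/N$ with $N$ a large enough prime) further gives $\partial P' \cap \Z^{3} = \emptyset$. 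One then verifies $(Q \cpl P') \cap \Z^{3} = (Q \cpl P) \cap \Z^{3}$ and, since $\partial P' \cap Q$ contains no integer point, also $\cj{Q \cpl P'} \cap \Z^{3} = (Q \cpl P') \cap \Z^{3}$, so $\rE_{1}(\cj{Q \cpl P'}) = \rE_{1}(Q \cpl P)$. Triangulating $\cj{Q \cpl P'}$ as above then yields the required simplices.

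Membership in $\sharpP$ is direct: a nondeterministic polynomial-time machine guesses an integer $x$ in the polynomially bounded $x$-range of $T_{1} \cup \cdots \cup T_{r}$ and accepts iff $x \in \rE_{1}(T_{1} \cup \cdots \cup T_{r})$, the latter test reducing to checking for some $i$ whether $T_{i} \cap (\{x\} \times \R^{2}) \cap \Z^{2} \ne \emptyset$, which is polynomial-time in fixed dimension by Lenstra's algorithm. The number of accepting computations equals $|\rE_{1}(T_{1} \cup \cdots \cup T_{r})|$. The main obstacle is the perturbation step: without eliminating integer points from $\partial P \cap Q$, the construction would return $|\rE_{1}(\cj{Q \cpl P})|$ in place of $|\rE_{1}(Q \cpl P)|$, so care is needed to ensure $\eps$ has polynomial bit-length while simultaneously achieving both $P' \cap \Z^{3} = P \cap \Z^{3}$ and $\partial P' \cap \Z^{3} = \emptyset$.
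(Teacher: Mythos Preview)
Your reduction is the same as the paper's: start from Theorem~\ref{th:main_3} and triangulate $Q \cpl P$ into polynomially many simplices in the fixed dimension~$3$. The paper's proof is four sentences and does no perturbation; it simply invokes Proposition~5.2.2 of~\cite{W} to produce in polynomial time simplices $T_{1},\dots,T_{r}$ with $\rE_{1}(T_{1}\cup\dots\cup T_{r}) = \rE_{1}(Q \cpl P)$ and stops there. Your perturbation $P \to P'$ is a clean, self-contained device to neutralize the boundary subtlety you correctly flagged (integer points on $\partial P \cap Q$ when passing to the closure), so that an ordinary closed-simplex triangulation of $\overline{Q \cpl P'}$ suffices; the paper instead offloads this issue onto the cited decomposition result. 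Your explicit $\sharpP$-membership argument via Lenstra is correct and is left implicit by the paper.
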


\medskip

\subsection{Outline of the paper}

We begin with notations (Section~\ref{sec:notation}) and a geometric
construction of certain polytopes based on Fibonacci numbers
(Section~\ref{sec:geom-constr}).  In Section~\ref{sec:main_1_proof} we
use this construction to prove Theorem~\ref{th:main_1} via a reduction of the
\textsc{GOOD SIMULTANEOUS APPROXIMATION (GSA)} Problem in Number Theory,
which is known to be $\NP$-complete.
The proof of Theorem~\ref{th:main_2} is via a reduction of $\textsc{QSAT}$ (Section~\ref{sec:main_2_proof}).
The proof of Theorem~\ref{th:main_3}
follows a similar route via reduction of
\textsc{\#{}GSA} (Section~\ref{sec:main_3_proof}).
Then we show that a ``hybrid'' version of~\eqref{eq:main_1} and~\eqref{eq:Schoning} with only $2$ quantifiers and $2$ disjunctions is still $\NP$-complete to decide (Section~\ref{sec:2_quant}).
Finally, we conclude with final remarks and open problems (Section~\ref{s:finrem}).

\bigskip

\section{Notations} \label{sec:notation}

\nin
We use $\nn \ts = \ts \{0,1,2,\ldots\}$ and $\Z_{+} = \{1,2,\ldots\}$.

\nin
All constant vectors are denoted $\a, \b, \cj x, \cj y, \cj t$ etc.


\nin
Matrices are denoted $A, B, C$, etc.

\nin
Variables are denoted $x,y,z$, etc.; vectors of variables are denoted $\x, \y, \z$, etc.

\nin
We write $\x \le \y$ if $x_{j} \le y_{j}$ for all $i$.











\nin
A \emph{polyhedron} is an intersection of finitely many closed half-spaces in~$\rr^n$.



\nin
A \emph{polytope} is a bounded polyhedron.

\nin
Polyhedra and polytopes are denoted by $P, Q, R$, etc.



\bigskip

\section{Geometric constructions and properties}  \label{sec:geom-constr}

\subsection{Fibonacci points}\label{sec:Fibonacci}

We consider the first $2d$ \emph{Fibonacci numbers}:
\begin{equation*}
F_{0} = 0, F_{1} = 1, F_{2} = 1, \dots, F_{2d-1}.
\end{equation*}
From these, we construct $d$ integer points:
\begin{equation}\label{eq:y_i_def}
\fib_{1} = (F_{1},F_{0}),\; \fib_{2} = (F_{3},F_{2}),\; \dots,\; \fib_{d} = (F_{2d-1},F_{2d-2}).
\end{equation}
Let
\begin{equation}\label{eq:Fib_J_def}
\Fib = \{\fib_{1}, \dots, \fib_{d} \} \subset\Z^{2}  \quad \text{and} \quad J = [1, F_{2d-1}] \times [0,F_{2d-2}] \cap \Z^{2}.
\end{equation}
We have $\Fib \subset J$.
Denote by $\chain$ the curve consisting of $d-1$ segments connecting $\fib_{i}$ to $\fib_{i+1}$ for $i=1,\dots,i-1$.

We also define the following two polygons.
Their properties will be mentioned later.
\begin{equation}\label{eq:part_above}
R_{1} =
\Biggl\{ \y = (y_{1},y_{2}) \in \R^{2} \; : \;
\left[
\begin{smallmatrix}
y_{1} &\ge &1\\
y_{2} &\le &F_{2d-2} \\
y_{2}F_{2d-1} - y_{1}F_{2d-2} &\ge &1
\end{smallmatrix}
\right]
\Biggr\},
\end{equation}
and
\begin{equation}\label{eq:part_below}
R_{2} =
\Biggl\{ \y \in \R^{2} \; : \;
\left[
\begin{smallmatrix}
y_{1} &\le &F_{2d-1}\\
y_{2} &\ge &0 
\end{smallmatrix}
\right]
\text{ and } \; y_{2} F_{2i} - y_{1} F_{2i-1} \le -2 \;
\text{ for } \;
i =1,\dots,d
\Biggr\}.
\end{equation}



\medskip

\nin The following properties are straightforward from the above definitions:

\medskip

\begin{enumerate}[label=(F{\arabic*})]
\setlength\itemsep{0.5em}

\item The points $\fib_{1}, \dots, \fib_{d}$ are in convex position.
The curve $\chain$ connecting them is convex (upwards).
See Figure~\ref{fig:Fib}.

\item Each segment $(\fib_{i}\. \fib_{i+1})$ and each triangle $\Delta_{i} = (0 \. \fib_{i} \. \fib_{i+1})$ has no interior integer points.
This can be deduced from the facts that two consecutive Fibonacci numbers are coprime, and also
\begin{equation*}
F_{i}F_{i+3} - F_{i+1}F_{i+2} = (-1)^{i-1} \quad \text{for all } i \ge 0.
\end{equation*}

\item The set of integer points in $J \cpl \Fib$ can be partitioned into $2$ parts: those lying strictly above the convex curve $\chain$, and those lying strictly below it.

\item The part of $J \cpl \Fib$ lying above $\chain$ is exactly $R_{1} \cap \Z^{2}$.
This can be seen as follows.
The line $\ell$ connecting $0$ and $\fib_{d}$ is defined by:
\begin{equation*}
y_{2} F_{2d-1} - y_{1} F_{2d-2}  = 0.
\end{equation*}
So every integer point $\y = (y_{1},y_{2})$ lying above $\ell$ satisfies:
\begin{equation*}
y_{2} F_{2d-1} - y_{1} F_{2d-2} \ge 1.
\end{equation*}
By property (F2), there are no integer points $\y$ between $\chain$ and $\ell$.
The other two edges of $R_{1}$ come from $J$.
See Figure~\ref{fig:Fib}.

\item The part of $J \cpl \Fib$ lying below $\chain$ is exactly $R_{2} \cap \Z^{2}$.
This can be seen as follows.
The line connecting $\fib_{i}$ and $\fib_{i+1}$ is defined by
\begin{equation*}
y_{2} F_{2i} - y_{1} F_{2i - 1} = -1.
\end{equation*}
So all integer points below that line satisfies:
\begin{equation*}
y_{2} F_{2i} - y_{1} F_{2i - 1} \le -2.
\end{equation*}
This gives $d-1$ faces for $R_{2}$, one for each $1 \le i \le d-1$.
The other two faces of $R_{2}$ come from from $J$.
See Figure~\ref{fig:Fib}.

\end{enumerate}

\begin{figure}[!h]
\centering
\begin{tikzpicture}[scale=1.5]
\draw[->] (0.2,0)--(4.5,0);
\draw[->] (0.2,0)--(0.2,3);

\coordinate (A) at (0.85,0);
\coordinate (B) at (1.2,0.5);
\coordinate (C) at (2.0,1.3);
\coordinate (D) at (4.1,3);
\coordinate (preD) at (3.35,2.4);

\coordinate (E) at (0.85,0.62);
\coordinate (F) at (0.85,3);

\coordinate (A') at (1.15,0);
\coordinate (B') at (1.5,0.5);
\coordinate (C') at (2.215,1.2);
\coordinate (D') at (4.1,2.8);
\coordinate (preD') at (3.5,2.3);
\coordinate (D'') at (3.95,3);


\draw[black,fill=black] (A) circle (.2ex);
\draw[black,fill=black] (B) circle (.2ex);
\draw[black,fill=black] (C) circle (.2ex);
\draw[black,fill=black] (D) circle (.2ex);

\draw[red, line width = 0.6] (A')--(B')--(C');
\draw[red, line width = 0.6] (preD')--(D')--(4.1,0)--(A');
\draw[red, line width = 0.6] (D'')--(E)--(F)--cycle;
\draw[-,blue, line width = 1] (A)--(B)--(C);
\draw[-,blue, line width = 1] (preD)--(D);

 \node (R1) at (1.9,2.2) {$R_{1}$};
 \node (R2) at (3.2,0.9) {$R_{2}$};

 \node (O) at (0,0) {$0$};
 \node (y2) at (0,3) {$y_{2}$};
 \node[right] (y1) at (4.5,0) {$y_{1}$};

 \node[right] (fibd) at (D) {$\fib_d$};
 \node[above left] (fib1) at (A) {$\fib_1$};

 \node (dots) at (2.8,1.9) {$\iddots$};

\end{tikzpicture}
\caption{The points $\fib_{1}, \dots, \fib_{d} \in \Fib$ form a convex curve $\chain$ (blue).}
\label{fig:Fib}%
\end{figure}

\subsection{The polytopes}\label{sec:polytopes}

Given $\aalpha = (\al_{1}, \dots, \al_{d}) \in \Q^{d}$ and $\eps \in (0,\frac{1}{2}) \cap \Q$, for each $1 \le i \le d$, we define a polygon:
\begin{equation}\label{eq:P_i_def}
P_{i} = \big\{(x,w) \in \R^{2} \quad : \quad 1 \le x \le N,\quad  \al_{i} x - \eps \le w \le  \al_{i} x + \eps \big\}.
\end{equation}
Next, for each $1 \le i \le d$, we define a new polygon
\begin{equation}\label{eq:P'_i_def}
P'_{i} = \big\{ (x,\fib_{i},w) \;:\;  (x,w) \in P_{i} \big\} \subset \R^{4}.
\end{equation}
Finally, we define the convex hull:
\begin{equation}\label{eq:P_def}
P = \conv(P'_{1}, \dots, P'_{d}) \subset \R^{4}.
\end{equation}

\nin The following properties are straightforward from the above definitions:

\medskip

\begin{enumerate}[label=(P{\arabic*})]
\setlength\itemsep{0.5em}

\item Each $P_{i}$ is a parallelogram with vertices $\big\{ (1, \al_{i} \pm \eps),\, (N, \al_{i} N \pm \eps) \big\}$.


\item Each $P'_{i}$ is a \emph{parallelogram} in $\R^{4}$ (i.e., a Minkowski sum of two intervals), with vertices $\big\{ (1, \fib_{i}, \al_{i} \pm \eps),\, (N, \fib_{i}, \al_{i} N \pm \eps) \big\}$.



\item The set of all vertices from $P'_{1},\dots,P'_{d}$ are in convex position.
Each $P'_{i}$ forms a $2$-dimensional face of $P$.
This follows from from~\eqref{eq:P'_i_def} and (F1).

\item The polytope $P$ has $4d$ vertices, which are all the vertices of $P'_{1},\dots, P'_{d}$.

\item
For every vertex $(x,\y,w)$ of $P$, we have $\y = \fib_{i} \in \Fib$ for some $1 \le i \le d$.
Conversely, for every $\fib_{i} \in \Fib$, we have:
\begin{equation*}
\big\{(x,w) \in \R^{2} : (x,\fib_{i},w) \in P \big\} = P_{i}.
\end{equation*}




\end{enumerate}

\nin We will be using these properties in the latter sections.

\bigskip

\section{Proof of Theorem~\ref{th:main_1}}\label{sec:main_1_proof}
\subsection{}
By a \emph{box} in $\Z^{d}$, we mean the set of integer points of the form $[\al_{1},\be_{1}]\times\dots\times[\al_{d},\be_{d}] \cap \Z^{d}$.
We will prove the following stronger version of Theorem~\ref{th:main_1}.

\begin{theo}\label{th:main_1_restated}
Given a polytope $U \subset \R^{6}$ and two finite boxes $I \subset \Z$, $J \subset \Z^{2}$, deciding the sentence
\begin{equation}\label{eq:main_1_restated}
\ex x \in I \quad \for \y \in J \quad \ex \z \in \Z^{3} \quad : \quad (x,\y,\z) \in U
\end{equation}
is an $\NP$-complete problem.
Here $U$ is described by a system $A \, (x,\y,\z) \le \b$, where $A \in \Z^{m \times 6}$ and $\b \in \Z^{m}$.
\end{theo}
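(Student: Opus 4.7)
The plan is to prove NP-completeness in two parts.

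\textbf{Containment in NP.} I would take the outer witness to be $x \in I$, whose bit-length is polynomial in the input. Given $x$, the residual sentence $\for \y \in J\;\ex \z \in \Z^3 : (x, \y, \z) \in U$ has only two alternating quantifiers in the fixed dimensions $d_1 = 2,\; d_2 = 3$, with $J$ a polyhedron. Kannan's Theorem~\ref{th:Kannan} decides this in polynomial time, so the full sentence is in NP.

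\textbf{NP-hardness via GSA.} Recall that \textsc{Good Simultaneous Approximation} asks, given $\al_1,\dots,\al_d \in \Q$, $N \in \Z_+$ and $\eps \in \Q \cap (0, \tfrac12)$, whether there exists $q \in [1, N] \cap \Z$ such that the distance from $q\al_i$ to the nearest integer is at most $\eps$ for all $i \in [1,d]$. Given such an instance, I would set $I = [1, N] \cap \Z$, take $J$ from~\eqref{eq:Fib_J_def} for the same $d$, and use the outer $x$ to represent $q$. By property (F3), the integer points of $J$ split into $\Fib$ and $(R_1 \cup R_2) \cap \Z^2$, so the strategy is to design a polytope $U \subset \R^6$ for which the inner sentence $\for \y \in J\;\ex \z \in \Z^3 : (x,\y,\z) \in U$ is equivalent to (i) for each $\fib_i \in \Fib$, some integer $w$ satisfies $|\al_i x - w| \le \eps$, and (ii) for every $\y \in (R_1 \cup R_2) \cap \Z^2$ the condition is trivially satisfiable. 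Condition (i) is exactly ``$q = x$ is an $\eps$-simultaneous approximation of $(\al_1, \ldots, \al_d)$'', so the outer $\ex x \in I$ matches the GSA question.

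\textbf{Designing $U$.} For (i), the polytope $P \subset \R^4$ from Section~\ref{sec:polytopes} is the right tool: by property (P5), the fiber of $P$ over $\y = \fib_i$ is the parallelogram $P_i$ from~\eqref{eq:P_i_def}, which contains an integer point $(x,w)$ iff $|\al_i x - w| \le \eps$. To promote $P$ to a polytope $U \subset \R^6$ that additionally satisfies (ii), I would use the two extra $\z$-coordinates $z_2, z_3$ to attach, via convex hull, two auxiliary ``sheets'' built from $R_1$ and $R_2$: the sheet attached at $z_2 = K$ (for some large integer $K$) covers the $R_1$-points, and the sheet at $z_3 = K$ covers the $R_2$-points. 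A witnessing $\z$ for $\y \in R_1 \cap \Z^2$ is then obtained by taking $z_2 = K$, and symmetrically for $R_2$.

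\textbf{Main obstacle.} The delicate step is preventing the convex-hull construction from producing spurious integer fibers over $\Fib$: any integer $(x, \fib_i, \z) \in U$ arising as a convex combination of a point of $P$ and a point of one of the auxiliary sheets could accidentally mimic a good simultaneous approximation without actually being one. Properties (F2)--(F5) are the key geometric input that rules this out: since each triangle $\Delta_i = (0\,\fib_i\,\fib_{i+1})$ is integer-point-free and $R_1, R_2$ lie strictly on opposite sides of the convex curve $\chain$ through $\Fib$, by choosing the shift constant $K$ sufficiently large (but polynomial in the input size) one forces every integer point of $U$ with $\y \in \Fib$ to lie in $P$ itself, so the $\y \in \Fib$ fibers reduce to the $P_i$ fibers. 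Once this disjointness is established, the equivalence with GSA is immediate, the reduction is polynomial, and NP-hardness follows.
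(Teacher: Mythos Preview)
Your overall architecture matches the paper's proof exactly: membership in $\NP$ via Kannan's theorem, hardness via \textsc{GSA}, the Fibonacci set $\Fib\subset J$ indexing the constraints, the polytope $P\subset\R^4$ from Section~\ref{sec:polytopes}, and the two polygons $R_1,R_2$ capturing $J\setminus\Fib$. Where you diverge is in the final ``compression'' of the three pieces $P$, (something over $R_1$), (something over $R_2$) into a single polytope $U\subset\R^6$, and this is where your argument has a gap.

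Your proposal is to take the convex hull after shifting the $R_1$-sheet to $z_2=K$ and the $R_2$-sheet to $z_3=K$ for a \emph{large} integer~$K$, and then to argue that properties (F2)--(F5) rule out spurious integer fibers over $\Fib$. But making $K$ large moves in the wrong direction: the projection of $U$ to the $(z_2,z_3)$-plane is the triangle $\conv\{(0,0),(K,0),(0,K)\}$, which contains $\binom{K+2}{2}$ integer points, and every one of them yields a nontrivial mixed fiber. Concretely, at $(z_2,z_3)=(1,0)$ the fiber over $(x,\y)$ is a $(1-\tfrac1K)$-to-$\tfrac1K$ Minkowski-type mixture of the $P$-fiber and the $R_1$-sheet fiber; to show that no such mixture contains an integer point with $\y=\fib_i$ when $\qrem{\al_i x}>\eps$ you would need a quantitative argument (controlling how far the mixed fiber can drift from $P_i$), not just the qualitative facts (F2)--(F5) about which side of $\chain$ the sets $R_1,R_2$ lie on. You have not supplied that argument, and it is not clear it goes through for $K$ of polynomial size.

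The paper sidesteps this obstacle entirely with Lemma~\ref{lem:compress}: instead of large~$K$, place the three polytopes $P,R'_1,R'_2\subset\R^4$ at three \emph{vertices of the unit square} in the extra coordinates $\t\in\R^2$ and take the convex hull. The only integer points of $\conv\{(0,0),(1,0),(0,1)\}$ are the three vertices themselves, so every integer point of $U$ has $\t\in\{\cj t_1,\cj t_2,\cj t_3\}$ and hence lies in one of $P,R'_1,R'_2$ exactly. No mixed fibers arise, and no Fibonacci-specific geometry is needed at this step. In short, the fix to your plan is to set $K=1$ (and use distinct $0/1$ tags for the three pieces) rather than $K$ large.
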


Since low dimensional boxes can be easily embedded into higher dimensions,
the above implies Theorem~\ref{th:main_1} for every $d_{1}\ge 1, d_{2} \ge 3$ and $d_{3} \ge 3$.
Compared to Theorem~\ref{th:main_1}, all parameters in the above theorem are fixed, except for $m$.
So from now on, the symbols $n$ and $d$ will be reused for other purposes.
For a vector $\aalpha = (\al_{1},\dots,\al_{d}) \in \Q^{d}$ and an integer $x \in \Z$, we define
\begin{equation}\label{eq:qrem_def}
\qrem{x\aalpha} = \max_{1 \le i \le d} \qrem{q\al_{i}},
\end{equation}
where for each rational $\be \in \Q$, the quantity $\{\be\}$ is defined as:
\begin{equation*}
\qrem{\be} \. := \. \min_{n\in\Z} | \be - n | \. = \. \min \ts \bigl\{ \be - \floor{\be}, \lceil{\be}\rceil - \be \bigr\}\ts.
\end{equation*}

\problemdef{GOOD SIMULTANEOUS APPROXIMATION (GSA)}
{A rational vector $\aalpha = (\al_{1}, \dots, \al_{d}) \in \Q^{d}$ and $N \in \NN$, $\eps \in \Q$.}
{Is an integer $x \in [1, N]$ such that $\qrem{x\aalpha} \le \eps\ts$?}

Note that $\textsc{GSA}$ is only non-trivial for $\eps < 1/2$.
\nin We need the following result by Lagarias:

\begin{theo}[\cite{Lag}]
$\textsc{GSA}$ is $\NP$-complete.
\end{theo}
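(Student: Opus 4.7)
The plan has two parts: membership in $\NP$ (easy) and $\NP$-hardness (the crux).

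For membership, a witness is an integer $x \in [1,N]$, whose bit-length is $O(\log N)$ and hence polynomial in the input size. Given such $x$, verifying the condition $\qrem{x\al_i} \le \eps$ for each $i$ reduces to rational arithmetic: writing $\al_i = p_i/q_i$ in lowest terms, compute $x p_i \bmod q_i$ and compare the resulting fraction against $\eps$. All of this runs in polynomial time, so $\textsc{GSA} \in \NP$.

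For hardness, I would reduce from \textsc{Subset Sum}: given $a_1,\ldots,a_n \in \Z_+$ and target $T$, decide whether some $S \subseteq [n]$ satisfies $\sum_{i \in S} a_i = T$. The strategy is to encode the subset selection in the base-$M$ expansion of the multiplier $x$, for a sufficiently large integer $M$. I would construct $d \approx n+1$ rationals $\al_1,\ldots,\al_d$ with a common denominator $D$ engineered from $M$ and the $a_i$, so that the constraints $\qrem{x\al_j} \le \eps$, taken together, force two things: first, that $x = \sum_{i=1}^n \epsilon_i M^{i-1}$ with each ``digit'' $\epsilon_i \in \{0,1\}$, enforced by rationals that act as digit extractors; and second, that $\sum_{i=1}^n \epsilon_i a_i = T$, enforced by one additional rational whose residue modulo $D$ is tight exactly when the target is hit. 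Setting $N = M^n$, a valid $x$ yields a good simultaneous approximation if and only if the associated subset of indices sums to $T$.

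The main obstacle is precisely the engineering of $\al_1,\ldots,\al_d$, of $\eps$, and of the denominator $D$ so that both completeness and soundness hold. Completeness -- every legitimate $0/1$ encoding of a valid subset must give $\qrem{x\al_j} \le \eps$ for all $j$ -- demands that $\eps$ be large enough to absorb unavoidable round-off. Soundness -- no ``spurious'' $x$ that breaks the $0/1$ digit structure or misses the target may slip through -- demands that $\eps$ be small enough relative to the spacing of integer multiples of each $\al_j$. Balancing these opposing forces requires denominators with carefully placed prime factors (typically powers of $M$ together with auxiliary primes coprime to $M$), so that the $\eps$-neighborhood of $\Z$ around each $x\al_j$ selects exactly the intended residue class modulo $D$. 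Carrying out this construction while keeping every numerator, denominator, the bound $N$, and the tolerance $\eps$ of polynomial bit-length is the technical heart of Lagarias's argument in~\cite{Lag}, and is where I would expect to spend most of the effort.
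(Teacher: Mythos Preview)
The paper does not prove this theorem; it is quoted as Lagarias's result~\cite{Lag}. So there is no ``paper's own proof'' to match line by line. However, in Section~\ref{sec:main_3_proof} the paper does describe the structure of Lagarias's argument, and it differs from your sketch: Lagarias reduces from \textsc{WEAK PARTITION} (given $\a\in\Z^d$, decide whether some $\y\in\{-1,0,1\}^d$ has $\a\cdot\y=0$), which in turn is $\NP$-complete via a parsimonious reduction from \textsc{KNAPSACK}~\cite{Boas}. The paper stresses that Lagarias's reduction is itself parsimonious, a feature used to conclude that $\textsc{\#GSA}$ is $\sharpP$-complete.

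Your plan instead reduces directly from \textsc{Subset Sum}, encoding the characteristic vector of a subset as the base-$M$ digits of the multiplier~$x$. This is a different route. In Lagarias's construction the $\{-1,0,1\}$ coefficients of \textsc{WEAK PARTITION} map naturally to the three nearest-integer residues of $x\al_i$, which is what makes the approximation tolerance~$\eps$ do the work cleanly; no separate ``digit-extractor'' rationals are needed, and parsimony comes essentially for free. Your base-$M$ approach would need extra $\al_j$'s just to pin each digit to $\{0,1\}$, and you would also have to argue parsimony separately if you want the $\sharpP$-hardness of $\textsc{\#GSA}$ later. So your outline is a plausible strategy for $\NP$-hardness on its own, but it is not what Lagarias does, and your closing sentence mis-attributes the Subset~Sum digit scheme to~\cite{Lag}.

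Your $\NP$ membership argument is fine and matches what anyone would write.
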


Let us emphasize that in $\textsc{GSA}$, the number $d$ is part of the input.
If $d$ is fixed instead, then the problem can be decided in polynomial time (see~\cite{Lag} and~\cite[Ch.~5]{GLS}).
What follows is a reduction of $\textsc{GSA}$ to a sentence of the form~\eqref{eq:main_1_restated}.
$\textsc{GSA}$ can be expressed as an integer programming problem:
\begin{equation}\label{eq:GSAIP}
\ex \, x, w_{1}, \dots, w_{d}  \in \Z \quad : \quad 1 \le x \le N,\quad -\ep \le   \al_{i} x - w_{i} \le \ep.
\end{equation}
The inequalities on $w_{i}$ can be expressed as $(x,w_{i}) \in P_{i}$, where $P_{i}$ was defined in~\eqref{eq:P_i_def}.
Letting $I = [1,N] \cap \Z$, we see that $\textsc{GSA}$ is equivalent to deciding:
\begin{equation}\label{eq:all_P_i}
\ex x \in I \quad : \quad \bigwedge_{i=1}^{d} \Big[ \ex w \in \Z \;:\; (x,w) \in P_{i} \Big].
\end{equation}

\begin{lem} Let $\Fib = \{\fib_{1},\dots, \fib_{d}\}$ be as in~\eqref{eq:Fib_J_def} and $P$ be as in~\eqref{eq:P_def}. We have:
\begin{equation}\label{eq:intermediate}
\qrem{x\aalpha} \le \eps \quad \iff \quad \for \y \in \Fib \quad \ex w \in \Z : (x,\y,w) \in P.
\end{equation}
\end{lem}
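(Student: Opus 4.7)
The plan is to reduce the equivalence to a direct unpacking of property (P5) from Section~\ref{sec:polytopes}, which says that for each $\fib_i \in \Fib$, the slice $\bigl\{(x,w) \in \R^{2} : (x,\fib_i,w) \in P\bigr\}$ equals $P_i$. Given (P5), both implications become almost mechanical, so I would spend no effort re-proving anything about the convex hull; all the geometric heavy lifting has already been done in Section~\ref{sec:Fibonacci} (culminating in (F1)) and transported to $P$ via (P3)--(P5).

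For the direction ($\Leftarrow$), assume the right-hand side and fix any $1 \le i \le d$. Then there exists $w_i \in \Z$ with $(x,\fib_i,w_i) \in P$, so by (P5) we have $(x,w_i) \in P_i$. By the definition~\eqref{eq:P_i_def} of $P_i$ this yields $1 \le x \le N$ and $|\al_i x - w_i| \le \eps$, hence $\qrem{\al_i x} \le \eps$. Taking the maximum over $i$ gives $\qrem{x\aalpha} \le \eps$ by~\eqref{eq:qrem_def}.

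For the direction ($\Rightarrow$), note that within the reduction $x$ lies in $I = [1,N]\cap \Z$, so $1 \le x \le N$. Assuming $\qrem{x\aalpha}\le\eps$, for each $i$ pick $w_i \in \Z$ to be the nearest integer to $\al_i x$; since $\eps < 1/2$ we have $|\al_i x - w_i| \le \eps$, so $(x,w_i) \in P_i$. By~\eqref{eq:P'_i_def} this means $(x,\fib_i,w_i) \in P'_i$, and since $P'_i$ is one of the pieces whose convex hull is $P$ (see~\eqref{eq:P_def}), we have $(x,\fib_i,w_i) \in P$. This verifies the right-hand side for every $\fib_i \in \Fib$.

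The only place where anything subtle happens is (P5), which is the reason the whole Fibonacci construction was set up: without the convex-position property (F1) of $\fib_1,\dots,\fib_d$, the slice of the convex hull $P$ at $\y = \fib_i$ could strictly contain $P_i$, and then the $\Leftarrow$ direction would fail. So the expected obstacle in a first-principles proof would be exactly the one already absorbed into (P5), and once that property is in hand the lemma is just translation between the $\qrem{\cdot}$ notation and the inequality description of $P_i$.
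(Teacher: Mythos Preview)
Your proof is correct and follows essentially the same approach as the paper's own proof: both directions are reduced to property~(P5), which identifies the slice of $P$ at $\y=\fib_i$ with $P_i$, and the argument is just a translation between $\qrem{x\aalpha}\le\eps$ and the inequality description of each $P_i$. Your write-up is a bit more explicit (e.g., spelling out the nearest-integer choice of $w_i$ and noting why $1\le x\le N$), but the logical content is the same.
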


\begin{proof}
Indeed, assume $\qrem{x\aalpha} \le \eps$, i.e., $x$ satisfies $\textsc{GSA}$.
By~\eqref{eq:all_P_i}, for every $i = 1,\dots,d$, there exists $w_{i} \in \Z$ with $(x,w_{i}) \in P_{i}$.
Now (P5) implies that $(x,\fib_{i},w_{i}) \in P$.
Since this holds for every $\fib_{i} \in \Fib$, the RHS in~\eqref{eq:intermediate} is satisfied.
For the other direction, assume the RHS in~\eqref{eq:intermediate} holds.
Then for every $\fib_{i}  \in \Fib$, there exists $w_{i} \in \zz$ with $(x,\fib_{i},w_{i}) \in P$.
By (P5), we have $(x,w_{i}) \in P_{i}$.
By~\eqref{eq:all_P_i}, $x$ satisfies $\textsc{GSA}$, i.e., $\qrem{x\aalpha} \le \eps$.
\end{proof}

\medskip

By the above lemma, $\textsc{GSA}$ is equivalent to:
\begin{equation}\label{eq:intermediate_2}
\ex x \in I \quad \for \y \in \Fib \quad   \ex w \in \Z : (x,\y,w) \in P .
\end{equation}
Consider $J$ from~\eqref{eq:Fib_J_def}, which contains $\Fib$.
We can rewrite the above sentence as:
\begin{equation}\label{eq:complement}
\ex x \in I \quad \for \y \in J \quad \big[  (\y \in J \cpl \Fib) \; \lor \; \ex w \in \Z : (x,\y,w) \in P \big].
\end{equation}
Recall the polygons $R_{1}$ and $R_{2}$ defined in~\eqref{eq:part_above} and~\eqref{eq:part_below}.
By properties (F3), (F4) and (F5), we can rewrite $\y \in J \cpl \Fib$ as $(\y \in R_{1}) \lor (\y \in R_{2})$.
Now, we can rewrite~\eqref{eq:complement} as:
\begin{equation}\label{eq:step_2}
\ex x \in I \quad \for \y \in J \quad \big[ \, (\y \in R_{1}) \;\lor\; (\y \in R_{2}) \;\lor\; \ex w \in \Z : (x,\y,w) \in P \, \big].
\end{equation}

\medskip

Next, define two polytopes $R'_{1}$ and $R'_{2}$ as follows:
\begin{equation}\label{eq:R'_i_def}
R'_{i} \. := \ts \bigl\{ (x,\y,0) \in \R^{4} : 0 \le x \le N,\; \y \in R_{i} \bigr\} \ts \subset \R^{4} \quad  \text{for} \quad  i = 1,2.
\end{equation}
Polytopes $R'_{1}$ and $R'_{2}$ are defined in such a way so that for every $x \in I$ and $\y \in J$,
we have $\y \in R_{i}$ if and only if there exists $w\in\Z$ such that
$(x,\y,w) \in R'_{i}$.\footnote{Such a $w$ must automatically be $0$ by the definition of $R'_{i}$.}
Now, it is clear that~\eqref{eq:step_2} is equivalent to:
\begin{equation*}
\ex x \in I \quad \for \y \in J \quad \Bigg[ \Biggl( \, \bigvee_{i=1}^{2} \ex w \in \Z : (x,\y,w) \in R'_{i}  \, \Biggr) \;\lor\; \Biggl( \ex w \in \Z : (x,\y,w) \in P \Biggr) \, \Bigg].
\end{equation*}
which is equivalent to:
\begin{equation}\label{eq:step_3}
\ex x \in I \quad \for \y \in J \quad \ex w \in \Z \quad : \quad (x,\y,w) \in R'_{1} \cup R'_{2} \cup P.
\end{equation}
The difference between~\eqref{eq:step_3} and~\eqref{eq:main_1_restated} is that we have $3$ polytopes instead of just one.

\medskip

\subsection{}
The final step is two compress three polytopes $R'_{1},R'_{2}$ and $P$ into one polytope.
Recall from (P4) that $P$ has $4d$ vertices, which correspond to the vertices of all $P_{i}$ for $1 \le i \le d$.
The vertices of $R_{1}$ and $R_{2}$ can be computed in polynomial time from systems~\eqref{eq:part_above} and~\eqref{eq:part_below}.
From there we easily get the vertices of $R'_{1}$ and $R'_{2}$.
Since $P, R'_{1}$ and $R'_{2}$ are in the fixed dimension $4$, we can write down all their facets in polynomial time using their vertices.
So we can represent:
\begin{equation}\label{eq:systems}
\aligned
P &= \left\{(x,\y,w) \in \R^{4} \; : \; A_{1} \, (x,\y,w) \le \cj b_{1} \right\},\\
R'_{1} &= \left\{(x,\y,w) \in \R^{4} \; : \; A_{2} \, (x,\y,w) \le \cj b_{2} \right\},\\
R'_{2} &= \left\{(x,\y,w) \in \R^{4} \; : \; A_{3} \, (x,\y,w) \le \cj b_{3} \right\}.
\endaligned
\end{equation}
The above three systems all have lengths polynomial in the input $\aalpha, N$ and $\eps$.
Next, we need the following lemma:

\begin{lem}\label{lem:compress}
Fix $n$ and $r$. Given $r$ polytopes $R_{1},\dots,R_{r} \subset \R^{n}$ described by $r$ systems
\begin{equation*}
R_{i} = \{ \x \in \R^{n} : A_{i} \, \x \le \cj b_{i} \},
\end{equation*}
there is a polytope $U \in \R^{n+\ell}$, where $\ell = \lceil \log_{2} r \rceil$, such that
\begin{equation}\label{eq:union}
\x \in \bigcup_{i=1}^{r} R_{i} \cap \Z^{n} \quad \iff \quad \ex \t \in \Z^{\ell} : (\x,\t) \in U \cap \Z^{n+\ell}.
\end{equation}
Furthermore, the system $A \, (\x,\t) \le \b$ that describes $U$ can be found in polynomial time, given $A_{i}$'s and $\cj b_{i}$'s as input.
\end{lem}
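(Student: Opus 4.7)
The plan is a standard disjunctive-to-conjunctive encoding using $\ell$ binary indicator variables together with a big-$M$ trick. Since $\ell = \lceil \log_2 r \rceil$, I first extend the family to exactly $2^\ell \ge r$ polytopes by repeating $R_1$ in the extra slots; this does not change the union $R_1 \cup \dots \cup R_r$. For each $i \in \{1,\ldots,2^\ell\}$, let $\v_i \in \{0,1\}^\ell$ be the binary expansion of $i-1$, so the strings $\v_i$ enumerate all candidate indices. An auxiliary vector $\t \in \Z^\ell$, constrained by $0 \le t_j \le 1$ and hence forced into $\{0,1\}^\ell$ by integrality, will be used to select the index $i$ for which membership $\x \in R_i$ is enforced, via the condition $\t = \v_i$.

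The selection is realized by the linear indicator
$$s_i(\t) \, = \, \sum_{j : \, v_{i,j} = 0} t_j \; + \; \sum_{j : \, v_{i,j} = 1} (1 - t_j),$$
which vanishes when $\t = \v_i$ and is $\ge 1$ for every other $\t \in \{0,1\}^\ell$. Since $n$ is fixed and each $R_i$ is a bounded rational polytope, a common bounding box $[-B,B]^n \supseteq R_1 \cup \dots \cup R_{2^\ell}$ can be computed in polynomial time with $B$ of polynomial bit size, via the classical Hadamard/Cramer estimate on vertex coordinates. I then fix a single constant $M$ large enough that $a \cdot \x - b \le M$ holds on $[-B, B]^n$ for every row $(a,b)$ of any system $A_i \x \le \cj b_i$; clearly a suitable $M$ of polynomial bit size exists.

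I define $U \subset \R^{n+\ell}$ as the polytope cut out by $-B \le x_k \le B$ for $1 \le k \le n$, by $0 \le t_j \le 1$ for $1 \le j \le \ell$, and by the inequalities
$$a \cdot \x \; - \; M \, s_i(\t) \; \le \; b$$
for every $i \in \{1,\ldots,2^\ell\}$ and every row $(a,b)$ of $A_i \x \le \cj b_i$. Verifying~\eqref{eq:union} is then immediate: if $\x \in R_i \cap \Z^n$, setting $\t := \v_i$ gives $s_i(\t) = 0$, which activates exactly the $R_i$-block of constraints (satisfied by assumption), while $s_j(\t) \ge 1$ for every $j \ne i$ renders all other blocks vacuous by the choice of $M$. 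Conversely, an integer point $(\x,\t) \in U$ has $\t \in \{0,1\}^\ell$ and hence $\t = \v_i$ for a unique $i$, so the $i$-th block of inequalities alone is binding and certifies $\x \in R_i$.

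The only non-routine point is the bit-complexity of $B$ and $M$; this is standard for rational polytopes in fixed dimension $n$, using the classical fact that each vertex coordinate of $\{\x : A\x \le \cj b\}$ is a ratio of $n \times n$ subdeterminants of $[A \mid \cj b]$. Given this, every inequality in the description of $U$ has polynomial bit size, and since $r$ and $\ell$ are fixed constants the total number of inequalities is polynomial in the input length. Hence the system $A\,(\x,\t) \le \b$ defining $U$ can be produced in polynomial time from the matrices $A_i$ and vectors $\cj b_i$.
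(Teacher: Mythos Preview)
Your proof is correct, but takes a genuinely different route from the paper's. The paper lifts each $R_j$ to the slice $\{\t = \cj t_j\}$ for a chosen vertex $\cj t_j$ of the $\ell$-cube and sets $U = \conv(U_1,\dots,U_r)$; the key observation is that the only integer points of $\conv(\cj t_1,\dots,\cj t_r)$ are the $\cj t_j$ themselves, so the integer points of $U$ are exactly those of the $U_j$. That argument is geometric and requires passing from the given H-descriptions to vertices and then back to facets of $U$, which is polynomial only because $n+\ell$ is fixed. Your big-$M$ disjunctive encoding instead stays entirely in H-representation: you pad to $2^\ell$ blocks, use the linear selectors $s_i(\t)$, and relax each block with a single constant $M$. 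This is the standard integer-programming trick and is arguably more elementary; it avoids any vertex enumeration or convex-hull computation, and the bounds $B,M$ you need follow directly from Cramer-type estimates. The paper's construction yields a tighter polytope (literally the convex hull of the lifted pieces), while yours produces a looser $U$ with many irrelevant real points at fractional $\t$, but for the integer-point equivalence~\eqref{eq:union} this makes no difference.
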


\begin{proof}
Let $\ell = \lceil \log_{2} r \rceil$, we have $2^{\ell} \ge r$. Pick $\cj t_{1}, \dots, \cj t_{r} \in \{0,1\}^{\ell}$ as $r$ different vertices of the $\ell$-dimensional unit cube.
Define
\[
U_{j} = \{(\x, \cj t_{j}) \in \R^{n+\ell} : \x \in R_{j}\} \quad \text{for} \quad j = 1,\dots,r \, ,
\]
and
\begin{equation*}
U = \conv(U_{1}, \dots, U_{r}).
\end{equation*}
In other words, we form $U_{j}$ by augmenting each $R_{j}$ with $\ell$ coordinates of $\cj t_{j}$.
Since $\cj t_{1},\dots,\cj t_{r}$ are in convex position, so are the new polytopes $U_{1},\dots,U_{j}$.
So the vertices of $U$ are all the vertices of all $U_{j}$.
Note that for every $\t \in \conv(\cj t_{1}, \dots, \cj t_{r})$, we have $\t \in \Z^{\ell}$ if and only if $\t = \cj t_{j}$ for some $j$.
This implies that the only integer points in $U$ are those in $U_{j}$'s.
In other words:
\begin{equation*}
(\x, \t) \in U \cap \Z^{n+\ell} \quad \iff \quad \x \in R_{j} \cap \Z^{n} \; \text{ and } \; \t = \cj t_{j} \; \text{ for some } \; j = 1,\dots,r.
\end{equation*}
So we have~\eqref{eq:union}.

For each $R_{j}$, its vertices can be computed in polynomial time from the system $A_{i} \,  \x \le \cj b_{i}$.
From these, we easily get the vertices for each $U_{j}$.
Thus, we can find all vertices of $U$ in polynomial time.
Note that $U$ is in a fixed dimesion $n + \ell$, since $n$ and $r$ are fixed.
Therefore, we can find in polynomial time all the facets of $U$  using those vertices.
This gives us a system $A \, (\x,\t) \le \cj b$ of polynomial length that describes $U$.
\end{proof}

Applying the above lemma for three polytopes $R'_{1},R'_{2}$ and $P$ with $n=4$ and $r=3$, we find a polytope $U \subset \R^{4+\ell}$ such that:
\begin{equation}\label{eq:apply_lem}
(x,\y,w) \in (R'_{1} \cup R'_{2} \cup P) \cap \Z^{4} \quad \iff \quad \ex \t \in \Z^{\ell} : (x,\y,w,\t) \in U \cap \Z^{4 + \ell}.
\end{equation}
Here we have $\ell = \lceil \log_{2}3 \rceil = 2$, which means $\t \in \Z^{2}$ and $U \subset \R^{6}$.
The lemma also allows us to find a system $A \, (x,\y,w,\t) \le \cj b$ that describes $U$, which has size polynomial in the systems in~\eqref{eq:systems}.
Now, we can rewrite~\eqref{eq:step_3} as:
\begin{equation*}
\ex x \in I \quad \for \y \in J \quad \ex w \in \Z \quad : \quad \ex \t \in \Z^{2} \quad (x,\y,w,\t) \in U,
\end{equation*}
which is equivalent to
\begin{equation*}
\ex x \in I \quad \for \y \in J \quad \ex \z \in \Z^{3} \quad : \quad A \, (x,\y,\z) \le \cj b.
\end{equation*}
Here $\z = (w,\t) \in \zz^{3}$.
The final system $A \, (x,\y,\z) \le \cj b$ still has size polynomial in the original input $\aalpha, N$ and $\eps$.
Therefore, the original $\textsc{GSA}$ problem is equivalent to~\eqref{eq:main_1_restated}.
This implies that~\eqref{eq:main_1_restated} is $\NP$-hard.

\medskip

It remains to show that~\eqref{eq:main_1_restated} is in $\NP$.
We argue that  more general sentence~\eqref{eq:main_1} is also in $\NP$.
From a result in~\cite{G}, if~\eqref{eq:main_1} is true, there must be an $\x$ satisfying it with length polynomial in the input $P, A$ and $\b$.
For such an $\x$, we can apply Theorem~\ref{th:Kannan} to check the rest of the sentence, which has the form $\for \y \ex \z$, in polynomial time.
This shows that deciding~\eqref{eq:main_1} is in $\NP$, and thus $\NP$-complete. \ $\sq$

\bigskip

\section{Proof of Theorem~\ref{th:main_2}} \label{sec:main_2_proof}

Recall the definition of boxes from Section~\ref{sec:main_1_proof}.
In this section, we prove:

\begin{theo}\label{th:main_2_restated}
Fix $k \ge 1$.
Given a polytope $U \subset \R^{k+7} $ and finite boxes $I_{1},\dots,I_{k} \subset \Z$, $J \subset \Z^{2}$, $K \subset \Z^{5}$, then the problem of deciding:
\begin{equation}\label{eq:QSAT_converted}
Q_{1} \, x_{1} \in I_{1} \quad \dots \quad Q_{k} \, x_{k} \in I_{k} \, \quad \for \y \in J \quad \ex \z \in K \quad : \quad (\x,\y,\z) \in U
\end{equation}
is $\SigmaP_{k}$ complete if $Q_{1} = \ex$, and $\PiP_{k}$ complete if $Q_{1} = \for$.
Here $Q_{1},\dots,Q_{k}\in\{\ex,\for\}$ are $k$ alternating quantifiers with $Q_{k} = \ex$.
The polytope $U$ is described by a system $A \, (\x, \y, \z) \le \cj b$, where $A \in \Z^{m\times(k+7)}$ and $\b \in \Z^{m}$.
\end{theo}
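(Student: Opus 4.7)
The plan is to mirror the proof of Theorem~\ref{th:main_1_restated} while reducing from Sch\"oning's $\SigmaP_k/\PiP_k$-complete problem (Theorem~\ref{th:Schoning}) instead of from $\textsc{GSA}$, thereby climbing $k-1$ extra levels of the polynomial hierarchy. Membership is immediate from Kannan: the two innermost quantifiers $\for \y \in J \, \ex \z \in K : A\,(\x,\y,\z) \le \b$ form a parametric integer programming instance in the fixed dimensions $\dim \y = 2$ and $\dim \z = 5$ with parameters $\x$, and are decidable in polynomial time by Theorem~\ref{th:Kannan}. Combined with the $k$ outer alternating quantifiers over polynomial-size boxes $I_i$, this places the whole sentence in $\SigmaP_k$ (resp.\ $\PiP_k$) according to the value of $Q_1$.

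For hardness I reduce from Theorem~\ref{th:Schoning}. Since Sch\"oning's completeness proof is itself obtained by reducing classical bounded-width $\textsc{QSAT}_k$ into integer form, I may assume WLOG that the Boolean matrix $\Boo$ is in CNF with clauses of a fixed width $r = O(1)$. Because $Q_k = \ex$ and the quantifiers alternate, the last Sch\"oning quantifier is $Q_{k+1} = \for$, and the starting sentence reads
\[
Q_1 x_1 \,\cdots\, Q_{k-1} x_{k-1} \, \ex x_k \, \for \y \in \Z^3 \ : \ \bigwedge_{j=1}^{N} \bigvee_{\ell=1}^{r} L_{j,\ell}(\x,\y).
\]
By standard a-priori bounds (a Doignon-style argument applied to each cell of the arrangement of the $L_{j,\ell}$), I may further restrict $\y$ to a polynomial-size box $[-M,M]^3 \cap \Z^3$, and then enumerate the pairs $(j,\y) \in [N] \times [-M,M]^3$ as indices $i = 1, \dots, d$ with $d$ polynomial in the input.

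Now I transplant the gadget of Section~\ref{sec:polytopes}. Place $d$-many Fibonacci points $\fib_1, \dots, \fib_d \in J \subset \Z^2$ on the convex curve from Section~\ref{sec:Fibonacci}, so that $\for (j,\y)$ becomes $\for \fib_i \in \Fib$. For each $i$, compress the clause-$j(i)$ disjunction $\bigvee_\ell L_{j(i),\ell}(\x,\y(i))$ into a single polytope $T_i \subset \R^{k + O(1)}$ by Lemma~\ref{lem:compress}. Stack the $T_i$ to form $P := \conv\bigl\{ (\fib_i, T_i) : 1 \le i \le d \bigr\}$ exactly as in Section~\ref{sec:polytopes}, so that the analogue of (P5) guarantees $P \cap \{\y = \fib_i\} = T_i$ for each $i$. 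Adjoin the complement polytopes $R'_1, R'_2$ making the inner condition vacuous on $\y \in J \sm \Fib$, and apply Lemma~\ref{lem:compress} one final time to merge $P \cup R'_1 \cup R'_2$ into the single polytope $U \subset \R^{k+7}$ demanded by~\eqref{eq:QSAT_converted}.

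The main technical obstacle I anticipate is lifting the convex-position properties (P3)-(P5) from the parallelograms of Section~\ref{sec:polytopes} to the richer polytopes $T_i$: one must calibrate the Fibonacci scale and, if necessary, dilate the $T_i$'s so that the vertex set of $\{(\fib_i, T_i)\}_i$ remains in convex position in the lifted dimension, ensuring that each integer slice at $\y = \fib_i$ recovers exactly $T_i \cap \Z^{\dim T_i}$. A secondary, more routine, obstacle is checking that the WLOG reduction of $\Boo$ to bounded-width CNF really survives Sch\"oning's reduction---this amounts to tracing clause width through his proof starting from $3$-CNF $\textsc{QSAT}_k$, which together with the dimension budget $\lceil \log_2 r\rceil + 2 \le 5$ justifies the bound $\dim \z = 5$.
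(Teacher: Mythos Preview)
Your high-level plan---Kannan for membership, Fibonacci plus Lemma~\ref{lem:compress} for hardness---is correct, and your ``main technical obstacle'' is in fact a non-issue: property~(P5) holds for \emph{arbitrary} fibre polytopes $T_i$, since each $\fib_i$ is an extreme point of $\conv(\Fib)$, which forces the slice of $\conv\{(\fib_i,T_i):1\le i\le d\}$ at $\y=\fib_i$ to equal $T_i$. No dilation or calibration is needed; the paper uses exactly this with polytopes $G_i\subset\R^{k+3}$ in place of the parallelograms~$P_i$.

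The real gap is your unrolling of Sch\"oning's innermost quantifier $\for\y\in\Z^3$. A Doignon-type or arrangement argument only shows that any witnessing $\y$ has polynomial \emph{bit-length}, so $M=2^{\text{poly}}$ and the box $[-M,M]^3$ contains exponentially many integer points; enumerating pairs $(j,\y)$ then produces exponentially many Fibonacci slots and the reduction is no longer polynomial. You cannot repair this by picking one representative per cell of the $L_{j,\ell}$-arrangement either, because the cells in $\y$-space move with the outer parameters~$\x$. Salvaging the approach would require opening up Sch\"oning's reduction to argue that his hard instances happen to have $\y$ ranging over a genuinely small set---at which point you are redoing the QSAT encoding from scratch anyway.

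That is precisely what the paper does, bypassing Sch\"oning and reducing directly from $\textsc{Q3SAT}_k$. Encode each Boolean block $\u_j\in\{0,1\}^\ell$ as an integer $x_j\in[0,2^\ell)$; the $s$-th bit of $x_j$ is read off by a single auxiliary integer $w$ via the parity test $\ex w:\; x_j/2^{s-1}-1 < 2w+c \le x_j/2^{s-1}$. Each $3$-clause then becomes a disjunction of three such two-inequality systems in $(\x,w)\in\R^{k+1}$, compressed by Lemma~\ref{lem:compress} into one polytope $G_i\subset\R^{k+3}$, and the matrix reads $\bigwedge_{i=1}^N\bigl[\ex\w\in\Z^3:(\x,\w)\in G_i\bigr]$---an exact analogue of~\eqref{eq:all_P_i} with only $N$ (the number of clauses, hence polynomially many) conjuncts. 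From here the remainder of the proof of Theorem~\ref{th:main_1_restated} applies verbatim. The moral: the universal quantifier you manufacture with the Fibonacci curve should range over the \emph{clauses}, not over the values of an integer variable.
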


For the proof, we work with the canonical problem $\textsc{Q3SAT}$.
Let $\Boo$ a Boolean expression of the form:
\setlength\itemsep{0.5em}
\begin{equation}\label{eq:Boo}
\Boo (\u_{1},\dots,\u_{k})  =  \bigwedge_{i=1}^{N} (a_{i} \lor b_{i} \lor c_{i}).
\end{equation}
Here each $\u_{j} = (u_{j1}, \dots, u_{j\ell}) \in \{0,1\}^{\ell}$ is a tuple of $\ts\ell\ts$ Boolean variables, and each $a_{i}, b_{i}, c_{i}$ is a literal in the set
$\; \{ u_{js},\; \lnot u_{js} \; : \; 1 \le j \le k,\; 1 \le s \le \ell \}$.
From $\Boo$, we construct a sentence:
\begin{equation}\label{eq:QSAT}
Q_{1} \, \u_{1} \in \{0,1\}^{\ell} \quad Q_{2} \, \u_{2} \in \{0,1\}^{\ell}  \; \dots \; Q_{k} \, \u_{k} \in \{0,1\}^{\ell}  \quad : \quad \Boo(\u_{1},\dots,\u_{k}).
\end{equation}
Here $Q_{1},Q_{2},\dots,Q_{k} \in \{\for,\ex\}$ are $k$ alternating quantifiers with $Q_{k} = \ex$.
The numbers $\ell$ and $N$ are part of the input.

\problemdef
{QUANTIFIED 3-SATISFIABILITY (Q3SAT)}
{A Boolean expression $\Boo$ of the form~\eqref{eq:Boo}.}
{The truth of the sentence~\eqref{eq:QSAT}.}

For clarity, we use the notation $\textsc{Q3SAT}_{k}$ to emphasize problem~\eqref{eq:QSAT} for a fixed $k$.
It is well-known that $\textsc{Q3SAT}_{k}$ is $\SigmaP_{k}$-complete if $Q_{1} = \ex$ and $\PiP_{k}$-complete if $Q_{1} = \for$ (see
e.g.~\cite{Pap,MM} and~\cite{AB}).
We proceed to reduce~\eqref{eq:QSAT} to~\eqref{eq:QSAT_converted}.
In fact, by representing each Boolean string $\u_{j} \in \{0,1\}^{\ell}$ as an integer $x_{j} \in [0,2^{\ell})$, we will only need to use $I_{1} = I_{2} = \dots = I_{k} = [0,2^{\ell}) \cap \Z$.

\medskip

For every string $\,\u_{j} = (u_{j1},\dots, u_{j\ell}) \in \{0,1\}^{\ell}\,$, let $\,x_{j} \in [0,2^{\ell})\,$
be the corresponding integer in binary.
Then $u_{js}$ is true or false respectively when the $s$-th binary digit of $x_{j}$ is $1$ or $0$.
In other words, $u_{js}$ is true or false respectively when $\floor{x_{j}/2^{s-1}}$ is odd or even.
Observe that $t = \floor{x_{j}/2^{s-1}}$ is the only integer that satisfies $x_{j}/2^{s-1} - 1 < t \le x_{j}/2^{s-1}$.
Now, each term $u_{js}$ or $\lnot u_{js}$ can be expressed in $x_{j}$ as follows:
\begin{equation}\label{eq:bit}
\aligned
u_{js} &\iff \ex w \in \Z \; : \;
\begin{Bmatrix}
2w+1 &> &x_{j} / 2^{s-1} - 1\\
2w+1 &\le &x_{j} / 2^{s-1}
\end{Bmatrix},
\\
\lnot u_{js} &\iff \ex w \in \Z \; : \;
\begin{Bmatrix}
2w &> &x_{j} / 2^{s-1} - 1\\
2w &\le &x_{j} / 2^{s-1}
\end{Bmatrix}.
\\
\endaligned
\end{equation}

Let $\x = (x_{1},\dots,x_{k}) \in [0,2^{\ell})^{k}$.
Recall that each term $a_{i},b_{i},c_{i}$ in~\eqref{eq:Boo} is  $u_{js}$ or $\lnot u_{js}$ for some $j$ and $s$. So each clause $a_{i} \lor b_{i} \lor c_{i}$ can be expressed in $\x$ as:
\begin{equation}\label{eq:clause}
a_{i} \lor b_{i} \lor c_{i} \iff \ex w \in \Z \; : \; \big[ D_{i} \, (\x,w) \le \cj d_{i} \big] \lor \big[ E_{i} (\x,w) \le \cj e_{i} \big] \lor \big[ F_{i} \, (\x,w) \le \cj f_{i} \big],
\end{equation}
where three systems $\, D_{i} \, (\x,w) \le \cj d_{i},\; E_{i} (\x,w) \le \cj e_{i},\; F_{i} \, (\x,w) \le \cj f_{i} \,$ are of the form~\eqref{eq:bit} (with different $j$ and $s$ for each).
Note that the strict inequalities in~\eqref{eq:bit} can be sharpened without losing any integer solutions (see Remark~\ref{rem:sharpen}).
We define the polytopes:
\begin{equation*}
\aligned
K_{i} &= \big\{(\x,w) \in \R^{k+1}  \; : \; x_{1},\dots,x_{k},w \in [0,2^{\ell}) ,\; D_{i} \, (\x,w) \le \cj d_{i} \big\}, \\
L_{i} &= \big\{(\x,w) \in \R^{k+1}  \; : \; x_{1},\dots,x_{k},w \in [0,2^{\ell}) ,\; E_{i} \, (\x,w) \le \cj e_{i} \big\}, \\
M_{i} &= \big\{(\x,w) \in \R^{k+1}  \; : \; x_{1},\dots,x_{k},w \in [0,2^{\ell}) ,\; F_{i} \, (\x,w) \le \cj f_{i} \big\}.
\endaligned
\end{equation*}
So the RHS in~\eqref{eq:clause} can be rewritten as:
\begin{equation*}
\ex w \in \Z \;:\; (\x,w) \in K_{i} \cup L_{i} \cup M_{i}.
\end{equation*}
Let $I_{1} = I_{2} = \dots = I_{k} = [0,2^{\ell}) \cap \Z$, we see that~\eqref{eq:QSAT} is equivalent to:
\begin{equation}\label{eq:QSAT_step_2}
Q_{1} \, x_{1} \in I_{1} \quad  \dots \quad Q_{k} \, x_{k} \in I_{k} \quad : \quad  \bigwedge_{i=1}^{N} \Big[\ex w \in \Z \;:\; (\x,w) \in K_{i} \cup L_{i} \cup M_{i} \Big].
\end{equation}
For each $i$, we apply Lemma~\ref{lem:compress} (with $n=k+1,\, r=3$) to the polytopes $K_{i}, L_{i}, M_{i} \subset \R^{k+1}$.
This gives us another polytope $G_{i} \subset \R^{k+3}$ that satisfies:
\begin{equation*}
(\x,w) \in K_{i} \cup L_{i} \cup M_{i}  \quad \iff \quad \ex \v \in \Z^{2} \;:\; (\x,w,\v) \in G_{i}.
\end{equation*}
Substituting this into~\eqref{eq:QSAT_step_2}, we have an equivalent sentence:
\begin{equation}\label{eq:QSAT_step_3}
Q_{1} \, x_{1} \in I_{1} \quad  \dots \quad Q_{k} \, x_{k} \in I_{k} \, \quad : \quad  \bigwedge_{i=1}^{N} \Big[\ex \w \in \Z^{3} \;:\; (\x,\w) \in G_{i} \Big],
\end{equation}
where $\w = (w,\v) \in \Z^{3}$, and each $G_{i} \subset \R^{k+3}$.

\medskip

Notice that apart from the outer quantifiers,~\eqref{eq:QSAT_step_3} is a direct analogue of~\eqref{eq:all_P_i}, with $G_{i}$ playing the role of $P_{i}$ and $(\x,\w)$ in place of $(x,w)$.
The proof now proceeds similarly to the rest of Section~\ref{sec:main_1_proof} after~\eqref{eq:all_P_i}.
Along the proof, we need to define $G'_{i}$ and $G$ in similar manners to~\eqref{eq:P'_i_def} and~\eqref{eq:P_def}.
The variable $\y \in \Z^{2}$ is again needed to define $G'_{i}$.
$\Fib$ and $J$ from~\eqref{eq:Fib_J_def} are reused without change.
This gives us $G'_{i}, G \subset \R^{k+5}$.
At the end of the proof, we also need to apply Lemma~\ref{lem:compress} one more time to produce a single polytope $U$, just like in~\eqref{eq:apply_lem}.
The dimension $4$ in~\eqref{eq:apply_lem} is now $k+5$.
As a result, the final polytope $U$ has dimension $k+7$.
In the final form~\eqref{eq:QSAT_converted}, we will have $\x \in \Z^{k}, \y \in \Z^{2}$ and $\z = (\w,\t) \in \Z^{5}$.

\medskip

We have converted~\eqref{eq:QSAT} to an equivalent sentence~\eqref{eq:QSAT_converted} with polynomial size.
This shows that~\eqref{eq:QSAT_converted} is $\SigmaP_{k}$/$\PiP_{k}$-hard depending when $Q_{1} = \ex/\for$.
For each tuple $\x = (x_{1},\dots,x_{k})$, we can check in polynomial time whether $\for \y \in J \;\; \ex \z \in K \;:\; A \, (\x,\y,\z) \le \cj b \;$ by applying Theorem~\ref{th:Kannan}.
This shows the membership of~\eqref{eq:QSAT_converted} in $\SigmaP_{k}$/$\PiP_{k}$.
We conclude that~\eqref{eq:QSAT_converted} is $\SigmaP_{k}$/$\PiP_{k}$-complete when $Q_{1} = \ex/\for$. \ $\sq$

\bigskip

\bigskip

\section{Proof of Theorem~\ref{th:main_3}}\label{sec:main_3_proof}

\subsection{}
Now we prove Theorem~\ref{th:main_3}.
We use the same construction as in the proof of Theorem~\ref{th:main_1}.
Recall the definition of $\qrem{x\aalpha}$ from Section~\ref{sec:main_1_proof}.
We reduce the following counting problem to a problem of the form~\eqref{eq:main_3}:

\countingdef
{\#GOOD SIMULTANEOUS APPROXIMATIONS (\#GSA)}
{A rational vector $\aalpha = (\al_{1}, \dots, \al_{d}) \in \Q^{d}$ and $N \in \NN$, $\eps \in \Q$.}
{The number of integers $x \in [1,N]$ that satisfy $\qrem{x\aalpha} \le \eps\ts$.}

The argument in~\cite{Lag} is based on a parsimonious reduction. Namely,
it gives a bijection between solutions for $\#\textsc{GSA}$ and the following problem:

\countingdef
{\#WEAK PARTITIONS}
{An integer vector $\a = (a_{1},\dots,a_{d}) \in \Z^{d}$.}
{The number of $\y \in \{-1,0,1\}^{d}$ for which $\a \cdot \y = 0$.}

It is well known and easy to see that \textsc{\#WEAK PARTITIONS} is $\sharpP$-complete.
The decision version $\textsc{WEAK PARTITION}$ was earlier shown by~\cite{Boas}
to be $\NP$-complete with a parsimonious reduction from $\textsc{KNAPSACK}$.
Together with Lagarias's reduction, we conclude:

\begin{theo}
$\textsc{\#GSA}$ is $\sharpP$-complete.
\end{theo}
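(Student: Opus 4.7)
The plan is to prove $\sharpP$-completeness of $\textsc{\#GSA}$ in two standard parts: membership and hardness. Membership in $\sharpP$ is immediate: a nondeterministic machine can guess an integer $x \in [1,N]$ (whose binary description has length polynomial in the input), and the predicate $\qrem{x\aalpha} \le \eps$ can be verified in polynomial time by computing each $\qrem{x\al_i}$ directly from the rational representations of $\al_i$ and $\eps$. Thus the number of accepting branches equals the number of valid $x$, placing $\textsc{\#GSA}$ in $\sharpP$.

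For hardness, I would reduce from $\textsc{\#WEAK PARTITIONS}$, which is $\sharpP$-complete by the cited parsimonious reduction from $\textsc{KNAPSACK}$ due to van Emde Boas. The core of the argument is that Lagarias's $\NP$-hardness reduction in \cite{Lag}, sending an instance $\a = (a_1,\dots,a_d) \in \Z^d$ of $\textsc{WEAK PARTITIONS}$ to an instance $(\aalpha, N, \eps)$ of $\textsc{GSA}$, is in fact \emph{parsimonious}: it produces a polynomial-time computable bijection between the set of $\y \in \{-1,0,1\}^d$ with $\a \cdot \y = 0$ and the set of integers $x \in [1,N]$ with $\qrem{x\aalpha} \le \eps$. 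The reduction sets $\eps$ small enough that only the controlled near-integer combinations coming from $\pm 1/0$ coefficient patterns can produce a good simultaneous approximation, and each such pattern corresponds to a unique $x \in [1,N]$.

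Given such a parsimonious reduction, the conclusion is formal: if $\textsc{\#GSA}$ were solvable in $\FP$ (or more generally, if one could count its solutions within any complexity class closed under parsimonious reductions), then so would $\textsc{\#WEAK PARTITIONS}$, contradicting its $\sharpP$-completeness. Hence $\textsc{\#GSA}$ is $\sharpP$-hard, and combined with membership, $\sharpP$-complete.

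The main obstacle is verifying that Lagarias's reduction is genuinely parsimonious rather than merely many-one. This amounts to checking two things inside \cite{Lag}: first, that every solution $x$ of the produced $\textsc{GSA}$ instance must arise from some $\pm 1/0$ combination of the input parameters (so no ``spurious'' solutions appear), and second, that distinct weak partitions yield distinct approximating integers~$x$. Both follow from the choice of $\aalpha$ as a sufficiently spread-out rational vector (e.g.\ with carefully scaled denominators) and the tightness of $\eps$, but the bookkeeping is the only nontrivial step; the rest of the argument is essentially a quotation of the parsimonious reduction together with the $\sharpP$-completeness of $\textsc{\#WEAK PARTITIONS}$.
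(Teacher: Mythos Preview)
Your proposal is correct and follows essentially the same approach as the paper: both establish $\sharpP$-hardness by observing that Lagarias's reduction from \textsc{WEAK PARTITIONS} to \textsc{GSA} is parsimonious, and that \textsc{\#WEAK PARTITIONS} is $\sharpP$-complete via the parsimonious reduction from \textsc{KNAPSACK} due to van Emde Boas. You add an explicit membership argument and spell out what parsimony entails, but the substance is identical to the paper's (terse) proof.
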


\subsection{}
Now we proceed with the reduction of $\textsc{\#GSA}$ to~\eqref{eq:main_3}.

\medskip

Just like the decision version, $\textsc{\#GSA}$ is only non-trivial for $\eps < 1/2$.
Define:
\begin{equation}\label{eq:Q_i_def}
Q_{i} = \big\{ (x,w) \in \R^{2} \quad : \quad 1 \le x \le N,\quad  \al_{i} x + \eps < w < \al_{i} x - \eps + 1 \big\}.
\end{equation}
Let $I = [1,N] \cap \Z$. We have:

\begin{obs}\label{obs:many_complement}
An $x \in I$ satisfies $\qrem{x\aalpha} \le \eps\ts$ if and only if for every $1 \le i \le d$, there is no  $w \in \zz$  such that $(x,w) \in Q_{i}$.
\end{obs}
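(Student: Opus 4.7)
The plan is to unwind the definition of $\qrem{x\aalpha} = \max_{i}\qrem{x\al_{i}}$ so that the statement reduces to the coordinate-wise claim
\[
\qrem{x\al_{i}} \le \eps \quad \iff \quad \text{there is no } w \in \Z \text{ with } \al_{i}x + \eps < w < \al_{i}x + 1 - \eps.
\]
Once this equivalence is proved for each fixed $i$, taking the conjunction over $i = 1,\dots,d$ yields the observation, since the constraint $1 \le x \le N$ in the definition of $Q_{i}$ is automatic for $x \in I = [1,N] \cap \Z$.

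For the coordinate-wise equivalence I would think of the strip $(\al_{i}x + \eps,\, \al_{i}x + 1 - \eps)$ as an open interval of length $1 - 2\eps > 0$ (using $\eps < 1/2$) translated by $\al_{i}x$. The natural approach is to let $n$ be the integer nearest to $t := \al_{i}x$, so that $\qrem{t} = |t - n| \le 1/2$, and analyze the two cases $t \ge n$ and $t < n$ separately. In each case, only one integer (respectively $n+1$ or $n$) is even a candidate to lie in the open interval, since the interval has length less than one and sits inside $[t, t+1]$. A quick inspection shows that this candidate belongs to the interval precisely when $\qrem{t} > \eps$; otherwise the interval is sandwiched inside $[n+\eps,\, n+1-\eps]$ or $[n-1+\eps,\, n-\eps]$ and contains no integer.

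Contrapositively, this shows exactly that no $w \in \Z$ lies in the strip iff $\qrem{\al_{i}x} \le \eps$, which is what was needed. The main (and essentially only) subtlety is to use $\eps < 1/2$ to rule out the possibility that two integers could lie in $(\al_{i}x + \eps,\, \al_{i}x + 1 - \eps)$ simultaneously; this is what guarantees the clean dichotomy in the case analysis. Nothing further is required: the observation is almost a direct translation between the geometric definition of $Q_{i}$ and the arithmetic definition of $\qrem{\cdot}$. $\quad\square$
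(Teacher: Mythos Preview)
Your argument is correct and follows essentially the same route as the paper: both reduce to the coordinatewise equivalence and exploit that an interval of length less than one meets $\Z$ in at most one point. The paper's version avoids your case split by noting that the half-open unit interval $[\al_{i}x-\eps,\ \al_{i}x-\eps+1)$ contains exactly one integer, which must lie either in $[\al_{i}x-\eps,\ \al_{i}x+\eps]$ or in $(\al_{i}x+\eps,\ \al_{i}x-\eps+1)$, but this is a cosmetic difference rather than a different idea.
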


Indeed, consider $x \in I$.
By~\eqref{eq:GSAIP}, we have $\qrem{x\aalpha} \le \eps$ if and only if for each $i$, there exists $w_{i} \in \zz$ with $w_{i} \in [\al_{i}x - \eps, \al_{i}x + \eps]$.
This interval of length $2\eps$ is contained in $[\al_{i}x-\eps,\al_{i}x-\eps+1)$.
The latter is a half-open unit interval, which always contains a unique integer $w_{i}$.
So $w_{i} \in [\al_{i}x - \eps, \al_{i}x + \eps]$ if and only if $w_{i} \notin (\al_{i}x + \eps, \al_{i}x - \eps + 1)$.
In other words, for each $1 \le i \le d$, there should be no $w \in \zz$ with $(x,w) \in Q_{i}$.
The converse is also straightforward.

\begin{rem}\label{rem:sharpen}
Note that each $Q_{i}$ has two open edges.
They can actually sharpened without affecting the integer points in $Q_{i}$.
Indeed, we can multiply each inequality with the denominators in $\al_{i}$ and~$\eps$, which have polynomial length.
Each resulting inequality is of the form $a < b$, with $a$ and $b$ having integer values.
This is equivalent to $a \le b - 1$.
Therefore, we can replace $Q_i$ with a (smaller) closed parallelogram containing the same integer points.
\end{rem}


By the above observation, $\textsc{\#GSA}$ asks for:
\begin{equation}\label{eq:sharpGSA_1}
N \; - \; \# \Big\{ x \in I \; : \; \ex 1 \le i \le d \quad \ex w \in \zz \quad (x,w) \in Q_{i} \Big\}.
\end{equation}
We convert the union of $Q_{i}$ into a complement $V \cpl U$ of two polytopes $U,V \subset \rr^{3}$.

\subsection{}
Let $T = 1 + N\max_{i} \al_{i}$.
Pick $d$ integers $0 < m_1 < m_2 < \dots < m_d$ so that
\begin{equation}\label{eq:m_prop}
\frac{m_{i-1} + m_{i+1}}{2} + 2T < m_{i} \quad \text{for} \quad 2 \le i \le d-1.
\end{equation}
We embed each parallelogram $Q_{i}$ into $\rr^{3}$ as
\begin{equation}\label{eq:translate}
R_{i} = \big\{ (x,y,w) \in \rr^{3} \; : \; (x,w-m_{i}) \in Q_{i},\; y = i \big\}.
\end{equation}
In other words, we translate $Q_{i}$ by $m_{i}$ in the direction $w$, and embed it into the plane $y = i$ inside $\rr^{3}$ (see Figure~\ref{f:Ri}).
The following is obvious:

\begin{obs}\label{obs:translated_complement}
For each $x \in I$ and $1 \le i \le d$, there exists $w \in \zz$ with $(x,w) \in Q_{i}$ if and only if there exists $(y,w) \in \zz^{2}$ with $(x,y,w) \in R_{i}$.
\end{obs}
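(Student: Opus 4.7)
The plan is to treat this as a bookkeeping check that follows directly from the definition~\eqref{eq:translate} of $R_i$. Unfolding~\eqref{eq:translate}, a triple $(x,y,w) \in \R^3$ lies in $R_i$ exactly when $y = i$ and $(x, w - m_i) \in Q_i$. Because both $i$ and $m_i$ are integers (by construction in~\eqref{eq:m_prop} and the range $1 \le i \le d$), the affine map $w' \mapsto (i,\, w' + m_i)$ will give an integer-preserving bijection between $\{w' \in \Z : (x,w') \in Q_i\}$ and $\{(y,w) \in \Z^2 : (x,y,w) \in R_i\}$.

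For the forward implication, given $w' \in \Z$ with $(x,w') \in Q_i$, I would set $y = i$ and $w = w' + m_i$, which is an integer since $m_i \in \Z_{+}$; by definition $(x,y,w) \in R_i$. Conversely, given $(y,w) \in \Z^2$ with $(x,y,w) \in R_i$, the definition forces $y = i$ and $(x, w - m_i) \in Q_i$, so $w' := w - m_i \in \Z$ serves as the required witness.

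I expect no real obstacle here: the only thing to verify is that $m_i \in \Z$, which is immediate from the choice in~\eqref{eq:m_prop}. The author's \emph{obvious} label is justified because $R_i$ is nothing more than $Q_i$ translated by an integer vector and embedded in the integer-indexed plane $\{y = i\} \subset \R^3$, an operation that trivially preserves the set of integer points in both directions.
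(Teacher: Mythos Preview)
Your proposal is correct and matches the paper's treatment: the paper simply declares the observation \emph{obvious} and gives no argument at all, and your unpacking of~\eqref{eq:translate} together with the integrality of $i$ and $m_i$ is exactly the trivial verification the authors had in mind.
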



Denote by $A_{i},B_{i},C_{i}$ and $D_{i}$ the vertices of $R_{i}$.
Let $K_{i} = (N,i,0)$ and $L_{i} = (1,i,0)$ for each $1 \le i \le d$.
Define:
\begin{equation}\label{eq:UV_def}
\aligned
U &= \conv \big\{A_{i},\ts B_{i},\ts K_{i},\ts L_{i} \,:\, 1 \le i \le d \big\} \subset \rr^{3},\\
V &= \conv \big\{C_{i},\ts D_{i},\ts K_{i},\ts L_{i} \,:\, 1 \le i \le d \big\} \subset \rr^{3}.
\endaligned
\end{equation}

\begin{figure}[hbt]
\begin{center}
\psfrag{Ki}{$K_i$}
\psfrag{Li}{$L_i$}
\psfrag{Ai}{$A_i$}
\psfrag{Bi}{$B_i$}
\psfrag{Ci}{$C_i$}
\psfrag{Di}{$D_i$}
\psfrag{Ri}{$R_i$}
\psfrag{x}{$x$}
\psfrag{y}{$y$}
\psfrag{w}{$w$}
\psfrag{R1}{$R_1$}
\psfrag{R2}{$R_2$}
\psfrag{Rd}{$R_d$}

\epsfig{file=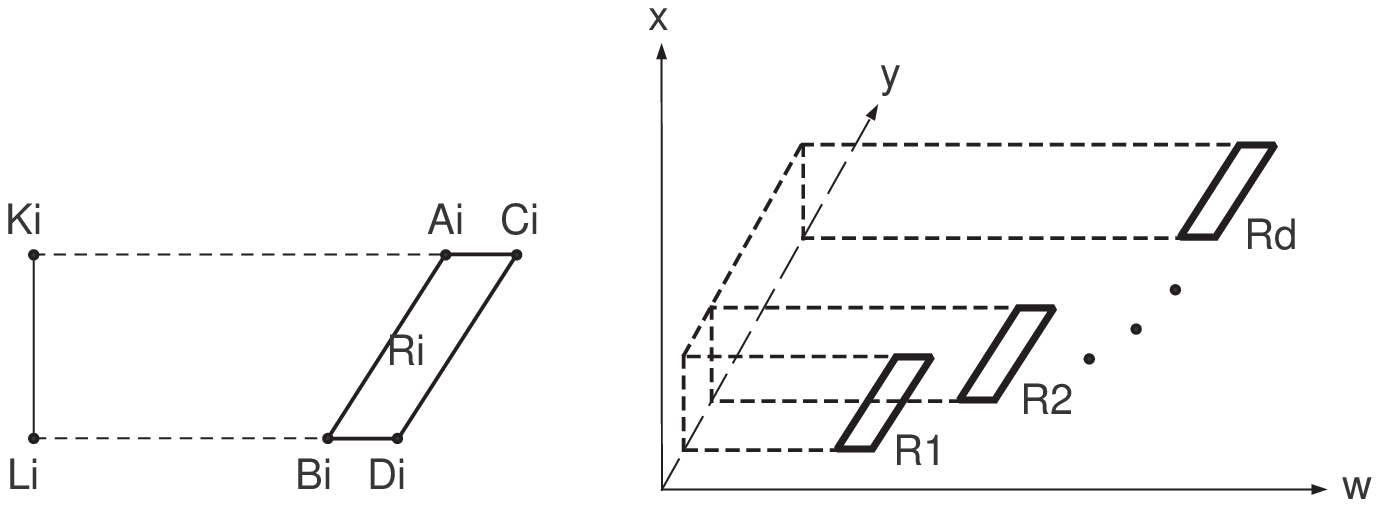, width=12cm}
\vskip-.15cm
\end{center}
\caption{The parallelograms $R_{i}$.}
\label{f:Ri}
\end{figure}

Since $\conv(A_{i},B_{i},K_{i},L_{i}) \subset \conv(C_{i},D_{i},K_{i},L_{i})$ for each $1 \le i \le d$, we have $U \subset V$.
It is also clear that:
\begin{equation}\label{eq:R_i_complement}
R_{i} = \conv(C_{i},D_{i},K_{i},L_{i}) \cpl \conv(A_{i},B_{i},K_{i},L_{i}).
\end{equation}
Denote by $\{y=i\}$ the plane $y=i$.

\begin{obs}\label{obs:y_i}
We have $U \cap \{y=i\} = \conv(A_{i},B_{i},K_{i},L_{i})$.
Similarly, $V \cap \{y=i\} = \conv(C_{i},D_{i},K_{i},L_{i})$.
\end{obs}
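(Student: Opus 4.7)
The $\supseteq$ direction is immediate: each of $A_i, B_i, K_i, L_i$ is among the defining vertices of $U$ and lies in the plane $\{y = i\}$. For the $\subseteq$ direction, the plan is to exhibit four supporting halfspaces of $U$ whose intersection with $\{y = i\}$ cuts out exactly the parallelogram $\conv(A_i, B_i, K_i, L_i)$. Three come for free: $w \ge 0$ holds at every vertex of $U$, and $1 \le x \le N$ bounds $U$ on the sides. The only nontrivial ingredient is a hyperplane that supports $U$ from above along the edge $A_i B_i$.

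Concretely, I look for a slope $c_i \in \R$ such that the affine function $\phi_i(x, y) := m_i + \alpha_i x + \epsilon + c_i(y - i)$ dominates the $w$-coordinate at every vertex of $U$. By construction $\phi_i$ agrees with the $w$-coordinate at $A_i$ and $B_i$; the constraints at $K_j, L_j$ are easy, since $\phi_i$ is positive of order $m_i$ there. The constraints at $A_j, B_j$ for $j \ne i$ translate into a lower bound on $c_i$ for each $j > i$ and an upper bound for each $j < i$. The crux is to show that every such lower bound is strictly below every upper bound, so that a feasible $c_i$ exists.

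This compatibility is exactly where the strict-concavity gap $2T$ in~\eqref{eq:m_prop} is used. Telescoping~\eqref{eq:m_prop} shows that the chord slopes $(m_j - m_i)/(j-i)$ for $j > i$ lie strictly below those for $j < i$, with a margin comfortably larger than $2T$. Meanwhile, the $\alpha$-perturbations in the numerators satisfy $|\alpha_j - \alpha_i| < 2T$ and $|N(\alpha_j - \alpha_i)| < 2T$ by the definition $T = 1 + N \max_k \alpha_k$, so the concavity margin absorbs them with room to spare and a valid $c_i$ exists. With $\phi_i$ in hand, any $(x, i, w) \in U$ satisfies $w \le m_i + \alpha_i x + \epsilon$; combined with $w \ge 0$ and $1 \le x \le N$ this places $(x, i, w)$ in $\conv(A_i, B_i, K_i, L_i)$.

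The claim for $V$ is proved by the same argument, with $C_i, D_i$ replacing $A_i, B_i$; the heights $w(C_j), w(D_j)$ also lie in the window $[m_j, m_j + T]$, so the same slope-feasibility check goes through. The main obstacle is the bookkeeping needed to verify the compatibility of the bounds on $c_i$; this should reduce, by averaging the second-difference inequality, to the consecutive case $j \in \{i-1, i+1\}$, which is a direct computation from~\eqref{eq:m_prop}.
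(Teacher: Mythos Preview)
Your approach is correct but genuinely different from the paper's. The paper works ``from the inside'': it shows that any point $(x,i,w)$ expressible as a convex combination of the vertices $A_j,B_j,K_j,L_j$ with $j\neq i$ must have $w<m_i$ (this is where~\eqref{eq:m_prop} enters, via the concavity of the heights $m_j$), and since the top edge $A_iB_i$ of the target trapezoid sits at height $\ge m_i$, such a point already lies in $\conv(A_i,B_i,K_i,L_i)$; adding back the index-$i$ vertices then cannot enlarge the slice. You instead work ``from the outside'', producing an explicit supporting halfspace $w\le\phi_i(x,y)$ that meets $\{y=i\}$ along the line $A_iB_i$. Both arguments use~\eqref{eq:m_prop} in essentially the same place---yours through the feasibility window for the slope $c_i$, the paper's through bounding the height of cross-chords at $y=i$---so neither is materially shorter. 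Your route is more constructive (it names an actual facet-defining direction of $U$), while the paper's avoids the slope bookkeeping altogether. Two minor remarks on your write-up: the bound $|N(\alpha_j-\alpha_i)|<2T$ can be sharpened to $<T$ since $N\max_k\alpha_k=T-1$, giving a cleaner margin; and the reason the $K_j,L_j$ constraints are ``easy'' is simply that the $A_j,B_j$ constraints dominate them pointwise (the latter vertices have strictly larger $w$-coordinate at the same $(x,y)$), so once the $A_j,B_j$ inequalities hold the $K_j,L_j$ ones are automatic.
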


Indeed, from~\eqref{eq:UV_def}, it is clear that $\conv(A_{i},B_{i},K_{i},L_{i})$ lies in both $U$ and the plane $y=i$.
On the other hand, if $(x,i,w) \in U$, it must be a convex combination of $A_{j},B_{j},K_{j},L_{j}$ for $1 \le j \le d$.
First, assume that
\begin{equation}\label{eq:conv_other}
(x,i,w) \in \conv\big\{A_{j},B_{j},K_{j},L_{j} : j \neq i \big\}.
\end{equation}
From~\eqref{eq:Q_i_def} and~\eqref{eq:translate}, the $w$-coordinates of $A_{j},B_{j},C_j,D_{j}$ are within the range $[m_{j}, m_{j}+T]$.
For $K_{j}$ and $L_{j}$, their $w$-coordinates are $0$.
Therefore, by the convexity condition~\eqref{eq:m_prop}, any point $(x,y,w)$ as in~\eqref{eq:conv_other} must have $w < m_{i} - T < m_{i}$.
This implies that $(x,i,w) \in \conv\big\{A_{i},B_{i},K_{i},L_{i}\big\}$, because the $w$-coordinates of $A_{i}$ and $B_{j}$ are at least $m_{i}$.
So we have
\begin{equation*}
\conv\big\{A_{j},B_{j},K_{j},L_{j} : j \neq i \big\} \cap \{y=i\} \subset \conv\big\{A_{i},B_{i},K_{i},L_{i}\big\}.
\end{equation*}
Adding $A_{i},B_{i},C_{i}$ and $D_{i}$ to the LHS, we have
\begin{equation*}
\conv\big\{A_{j},B_{j},K_{j},L_{j} : 1 \le j \le d \big\} \cap \{y=i\} = \conv\big\{A_{i},B_{i},K_{i},L_{i}\big\}.
\end{equation*}
This proves the observation for $U$.
The same argument works for $V$.


By Observation~\ref{obs:y_i}, for $(x,y,w) \in \zz^{3}$, we have $(x,y,z) \in V \cpl U$ if and only if
\begin{equation*}
(x,y,w) \in \conv(C_{i},D_{i},K_{i},L_{i}) \cpl \conv(A_{i},B_{i},K_{i},L_{i})
\end{equation*}
for some $1 \le i \le d$.
Combined with~\eqref{eq:R_i_complement} and Observation~\ref{obs:translated_complement}, for every $x \in I$, we have:
\begin{equation*}
\ex (y,w) \in \zz^{2} \quad (x,y,w) \in V \cpl Q \quad \iff \quad \ex 1 \le i \le d \quad \ex w \in \zz \quad (x,w) \in Q_{i}.
\end{equation*}
From~\eqref{eq:sharpGSA_1}, we conclude that $\textsc{\#GSA}$ is exactly:
\begin{equation*}
N \; - \; \# \Big\{ x \in I \; : \; \ex (y,z) \in \zz^{2} \quad (x,y,w) \in V \cpl U \Big\} = N - |\rE_{1}(V \cpl U)|.
\end{equation*}
Let $P = U, Q = V$ we have Theorem~\ref{th:main_3}.

\subsection{Proof of Corollary~\ref{th:many_proj}}
By Theorem~\ref{th:main_3}, counting $|\rE_{1}(Q \cpl P)|$ is $\sharpP$-complete for $P \subset Q \subset \R^{3}$.
Nevertheless, the complement $Q \cpl P$ can still be triangulated into polynomially many simplices $T_{1} \sqcup \dots \sqcup T_{r}$.
In fact, by an application of Proposition 5.2.2 in~\cite{W}, the systems describing all such $T_{i}$ can be found in polynomial time.
Therefore, counting $|\rE_{1}(T_{1} \sqcup \dots \sqcup T_{r})| = |\rE_{1}(Q \cpl P)|$ is $\sharpP$-complete. \ $\sq$

\bigskip

\section{Another hard decision problem}\label{sec:2_quant}
Our construction with Fibonacci points also yields the following completeness result with only $2$ quantifiers:

\begin{theo}\label{th:2_quant}
Given three polytopes $U_{1},U_{2},U_{3} \subset \R^{4}$ and two boxes $I \subset \zz, K \subset \zz^{3}$, deciding the sentence:
\begin{equation}\label{eq:2_quant}
\ex x \in I \quad \for \z \in K \quad : \quad (x,\z) \in U_{1} \cup U_{2} \cup U_{3}
\end{equation}
is $\NP$-complete.
\end{theo}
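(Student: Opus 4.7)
The plan is to reduce from $\textsc{GSA}$ following the Fibonacci-based template of Theorem~\ref{th:main_1_restated}, but absorbing the innermost existential quantifier $\ex w \in \Z$ into an enlarged universal quantifier. Starting from equation~\eqref{eq:step_3},
\[
\ex x \in I \;\; \for \y \in J \;\; \ex w \in \Z : \;\; (x,\y,w) \in R'_1 \cup R'_2 \cup P,
\]
I first bound $w$ to a polynomial-size window $W = [w_{\min}, w_{\max}] \cap \Z$ containing every necessary witness, set $K := J \times W \subset \Z^3$, and $\z = (\y,w)$. The next step is to construct three polytopes $U_1, U_2, U_3 \subset \R^4$ whose union covers all of $\{x\} \times K$ precisely when $x$ satisfies $\textsc{GSA}$.

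The intended design takes $U_3 := P$, whose slice at $\y = \fib_i$ is the narrow strip $\{w \in [\al_i x - \eps,\, \al_i x + \eps]\}$. The polytope $U_1$ should be an ``upper'' companion whose slice at $\y = \fib_i$ is the half-parallelogram $\{(x,w) : 0 \le x \le N,\; \al_i x + \tfrac12 \le w \le w_{\max}\}$ and which covers the rectangle $[0,N] \times W$ above every $\y \in R_1$; the polytope $U_2$ is constructed symmetrically on the ``lower'' side and above $R_2$. With this design in place, coverage of $\{x\} \times K$ reduces, at each $\y = \fib_i$, to covering every integer $w \in W$ by one of the three slices. The only integer that could possibly escape $U_1 \cup U_2$ is $w_i^* := \text{round}(\al_i x)$, and it escapes $U_3 = P$ exactly when $|w_i^* - \al_i x| > \eps$, i.e., precisely when $\textsc{GSA}$ fails at index $i$. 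Coverage at $\y \in R_1 \cup R_2$ is automatic, so $\ex x \for \z \in K : (x,\z) \in U_1 \cup U_2 \cup U_3$ is equivalent to $\textsc{GSA}$, yielding $\NP$-hardness. For $\NP$-membership I would guess $x$ and verify the inner universal sentence: it fails iff $K \cap \overline{U_1} \cap \overline{U_2} \cap \overline{U_3}$ contains an integer point, and expanding each polytope-complement as a union of the half-spaces complementary to its facets and distributing the intersection over unions yields polynomially many polytopes in fixed dimension $3$, whose integer-point emptiness is decidable via Lenstra's algorithm.

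The hard part will be realizing $U_1$ and $U_2$ as single convex polytopes with the specified slice behavior. The subtlety is that for $1 < i < d$ the Fibonacci point $\fib_i$ is not extreme in $\conv(\Fib \cup V_{R_1})$, where $V_{R_1}$ denotes the vertex set of $R_1$; therefore a naive convex-hull construction yields an enlarged slice at $\fib_i$ that spuriously covers $w_i^*$ regardless of whether $\textsc{GSA}$ holds, and the reduction collapses. My plan for resolving this is to place the $V_{R_1}$-extension vertices of $U_1$ only at the top level $w = w_{\max}$, choosing $w_{\max}$ polynomially large (of order $N \cdot \max_i \al_i$) so that every convex combination of Fibonacci upper-parallelograms with these extension vertices contributes only $w$-values strictly above $\al_i x + \tfrac12$ at each $\fib_i$; the portion of $R_1 \times W$ with small $w$ that is then left uncovered by $U_1$ is absorbed into $U_2$ (or, if necessary, into $U_3$) via a symmetric inflation. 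Carefully checking the resulting convex-combination inequalities simultaneously for all $i$, together with the mirror-image construction for $U_2$, closes the reduction.
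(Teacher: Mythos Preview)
Your high-level scheme is right: convert the innermost $\ex w$ into $\for w$ by covering the $w$-range with an upper piece, a lower piece, and a narrow strip, and handle $\y \in J\setminus\Fib$ via $R_1,R_2$. You also correctly isolate the crux: for $1<i<d$ the point $\fib_i$ fails to be extreme in $\conv(\Fib\cup R_1)$. But your proposed fix does not work quantitatively. By the Fibonacci identity $F_{2i-1}^2 - F_{2i+1}F_{2i-3} = -1$, the triangle $\fib_{i-1}\fib_i\fib_{i+1}$ has area $\tfrac12$, while $|\fib_{i-1}\fib_{i+1}|$ grows like $F_{2i+1}$; hence the distance from $\fib_i$ to the chord $\fib_{i-1}\fib_{i+1}$ is $\Theta(1/F_{2i+1})$. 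This means $\fib_i$ can be written as a convex combination of points in $\Fib\setminus\{\fib_i\}$ and a vertex of $R_1$ with the $R_1$-weight $\mu$ as small as $\Theta(1/F_{2d})$. At such a combination, the lower $w$-boundary of your $U_1$ at $\y=\fib_i$ is at most $\sum_{j\ne i}\lambda_j(\al_j x_j + \tfrac12) + \mu\, w_{\max}$; taking a GSA instance with $\al_j$ small for $j\ne i$ and $\al_i$ close to $1$, this is $O(1)+\mu\, w_{\max}$, which stays below $\al_i x$ unless $w_{\max}\gtrsim N/\mu = \Theta(N\cdot F_{2d})$. So a polynomial-size $w_{\max}$ cannot prevent $U_1$ from spuriously covering $w_i^\ast$, and the reduction collapses. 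Your fallback of absorbing the low-$w$ part of $R_1\times W$ into $U_2$ or $U_3$ is also blocked: the $\y$-projections of your $U_2$ (built over $R_2\cup\Fib$) and of $U_3=P$ (built over $\Fib$) lie on or below the Fibonacci chain, while $R_1$ lies strictly above it.

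The paper's way out is to exploit the asymmetry between $R_1$ (above the chain) and $R_2$ (below). It discards $P$ entirely and covers $[-1,T]$ at each $\fib_i$ by a lower piece $L_i=\{-1\le w\le \al_i x+\eps-1\}$ and an upper piece $M_i=\{\al_i x-\eps\le w\le T\}$; the integer gap between them is empty iff GSA holds at index~$i$. Then $U_1=[1,N]\times R_1\times[-1,T]$ is a pure box carrying \emph{no} Fibonacci slice condition; $U_3=M=\conv(M'_1,\dots,M'_d)$ has slice exactly $M_i$ at $\fib_i$ by extremality of $\fib_i$ in $\conv(\Fib)$; and $U_2=\conv\bigl([1,N]\times R_2\times[-1,T],\,L\bigr)$ works because the chain is the \emph{upper} boundary of $\conv(R_2\cup\Fib)$, so every $\fib_i$ remains extreme there and the slice of $U_2$ at $\fib_i$ is exactly $L_i$. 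The obstruction you identified is thus avoided by allotting $R_1$ its own polytope and merging $R_2$ only with the lower pieces. Your $\NP$-membership argument via Lenstra on the $m_1m_2m_3$ polyhedral pieces of $K\cap\overline{U_1}\cap\overline{U_2}\cap\overline{U_3}$ is fine.
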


Here the condition $(x,\z) \in U_{1} \cup U_{2} \cup U_{3}$ is expressed as a disjunction of three systems in four variables $x,z_{1},z_{2},z_{3}$.
Instead of many as in~\eqref{eq:Schoning}, we only need only $2$ disjunctions to express $(x,\z) \in U_{1} \cup U_{2} \cup U_{3}$.
Also notice that the quantifiers are $\ex\for$ as opposed to $\for\ex$ in Theorem~\ref{th:Kannan}.

\begin{proof}[Sketch of proof]
We again find a reduction of $\textsc{GSA}$.
Let $T = 1 + N\max_{i} \al_{i}$.
Recall $P_{i}$ from~\eqref{eq:P_i_def}.
For every $1 \le i \le d$, define two new polygons:
\begin{equation*}
\aligned
L_{i} &= \{(x,w) \in \R^{2} \; : \; 1 \le x \le N,\;  -1 \le w \le  \al_{i} x + \eps - 1\},\\
M_{i} &= \{(x,w) \in \R^{2} \; : \; 1 \le x \le N,\;  \al_{i} x - \eps \le w \le T\}.
\endaligned
\end{equation*}

\begin{obs}\label{obs:simple}
For every $x \in [1,N]$ and $1 \le i \le d$,
we have:
\begin{equation}\label{eq:ex_to_for}
\ex w \in \zz \;:\; (x,w) \in P_{i} \quad \iff \quad \for w \in [-1,T] \cap \zz \;:\; (x,w) \in L_{i} \cup M_{i}.
\end{equation}
\end{obs}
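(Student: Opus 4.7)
The plan is to introduce the open ``gap parallelogram''
\[
G_{i} \,=\, \bigl\{(x,w)\in\R^{2} : 1\le x\le N,\; \alpha_{i}x+\epsilon-1 < w < \alpha_{i}x-\epsilon\bigr\}\ts,
\]
which is non-empty precisely because $\epsilon<1/2$. Inside the horizontal strip $S=\{1\le x\le N,\ -1\le w\le T\}$, the definitions of $L_{i}$ and $M_{i}$ are rigged so that $L_{i}\cup M_{i}$ equals $S\sm G_{i}$ (on integer points, where boundary issues do not matter). Thus the right-hand side of~\eqref{eq:ex_to_for} is equivalent to the statement that no integer $w\in[-1,T]$ satisfies $(x,w)\in G_{i}$.

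Next, I would carry out the key slice analysis. For a fixed $x\in[1,N]$, the vertical $w$-slices of $G_{i}$ and of $P_{i}$ are the open interval $(\alpha_{i}x+\epsilon-1,\alpha_{i}x-\epsilon)$ and the closed interval $[\alpha_{i}x-\epsilon,\alpha_{i}x+\epsilon]$, respectively. These two intervals are disjoint, and their union is the half-open unit interval
\[
\bigl(\alpha_{i}x+\epsilon-1,\; \alpha_{i}x+\epsilon\bigr]\ts,
\]
which necessarily contains exactly one integer $w^{\star}$. Hence exactly one of the two slices $(G_{i})_{x}$ and $(P_{i})_{x}$ contains an integer, namely $w^{\star}$, and the other contains none. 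This immediately gives
\[
\ex w\in\Z:(x,w)\in P_{i}\ \iff\ \text{no integer lies in }(G_{i})_{x}\ts.
\]

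To finish, I would verify the small range check needed to replace ``no integer in $\Z$'' by ``no integer in $[-1,T]$'' on the right-hand side. Since we may assume $\alpha_{i}\ge 0$ (WLOG for \textsc{GSA}, which considers fractional parts), we have, for $1\le x\le N$,
\[
\alpha_{i}x+\epsilon-1 \,\ge\, \epsilon-1 \,>\, -1\ts,
\qquad
\alpha_{i}x+\epsilon \,\le\, N\max_{i}\alpha_{i}+\epsilon \,<\, T\ts.
\]
Therefore the unit interval $(\alpha_{i}x+\epsilon-1,\alpha_{i}x+\epsilon]$ lies strictly inside $[-1,T]$, so its unique integer $w^{\star}$ is automatically in $[-1,T]$. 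Consequently, ``no integer of $[-1,T]$ lies in $(G_{i})_{x}$'' is the same as ``no integer lies in $(G_{i})_{x}$'', and the equivalence~\eqref{eq:ex_to_for} follows.

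The only real obstacle is the last range check: one must be sure that the bounds $-1$ in the definition of $L_{i}$ and $T=1+N\max_{i}\alpha_{i}$ in the definition of $M_{i}$ are large enough to ``see'' the entire gap $G_{i}$. This is precisely what forces the otherwise odd-looking choice of these constants, and explains why the half-open interval of length $1$, rather than its interior, is the natural object in the argument.
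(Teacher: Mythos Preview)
Your proof is correct and follows essentially the same route as the paper's. Both arguments hinge on the half-open unit interval $(\alpha_{i}x+\epsilon-1,\,\alpha_{i}x+\epsilon]$ containing exactly one integer, and on observing that this integer lands in $[\alpha_{i}x-\epsilon,\alpha_{i}x+\epsilon]$ if and only if the complementary open gap $(\alpha_{i}x+\epsilon-1,\alpha_{i}x-\epsilon)$ is integer-free; your explicit naming of the gap $G_{i}$ and the range check for $[-1,T]$ (which the paper leaves implicit in its three-piece decomposition of $[-1,T]$) make the argument slightly more transparent, but the substance is identical.
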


Indeed, by~\eqref{eq:P_i_def}, we have $\ex w \in \zz : (x,w) \in P_{i}$ if and only if $[\al_{i}x - \eps, \al_{i}x + \eps]$ contains an integer point $w$.
Also notice that $[\al_{i}x - \eps, \al_{i}x + \eps] \subset (\al_{i}x + \eps - 1, \al_{i}x + \eps]$ and
\begin{equation*}
[-1,T] \;=\; [-1,\al_{i}x + \eps - 1] \; \sqcup \; (\al_{i}x + \eps - 1, \al_{i}x + \eps] \;\sqcup\; (\al_{i}x + \eps, T].
\end{equation*}
Since $(\al_{i}x + \eps - 1, \al_{i}x + \eps]$ is a half-open unit interval, it contains a unique integer point $w$.
So $w$ lies in $[\al_{i}x - \eps, \al_{i}x + \eps]$ if and only if
\begin{align*}
[-1,T] \cap \zz \; &= \; \big( [-1,\al_{i}x + \eps - 1] \; \sqcup \; [\al_{i}x - \eps, \al_{i}x + \eps] \; \sqcup \; (\al_{i}x + \eps,T] \big) \cap \zz \\
&= \; \big( [-1,\al_{i}x + \eps - 1] \; \sqcup \; [\al_{i}x - \eps, T] \big) \cap \zz.
\end{align*}
This last condition is exactly the RHS in~\eqref{eq:ex_to_for}.

\medskip


Recall the Fibonacci points $\Fib = \{\fib_{1},\dots,\fib_{d}\}$.
We construct $L'_{i}, M'_{i}$ similarly to~\eqref{eq:P'_i_def} and $L,M$ similarly to~\eqref{eq:P_def} using the same Fibonacci points.
As a direct analogy to~\eqref{eq:intermediate_2}, $\textsc{GSA}$ is equivalent to:
\begin{equation}\label{eq:2_quant_intermediate}
\ex x \in I \quad \for \y \in \Fib \quad \for w \in [-1,T] \cap \zz \quad : \quad (x,\y,w) \in L \cup M.
\end{equation}

Recall $J$ from~\eqref{eq:Fib_J_def}.
Let $K = J \times \big( [-1,T] \cap \zz \big)$, which is a box in $\zz^{3}$.
Let $\z = (\y,w) \in K$.
Also recall $R_{1}$ and $R_{2}$ from~\eqref{eq:part_above} and~\eqref{eq:part_below}.
Define
\begin{equation*}
U_{1} = [1,N] \times R_{1} \times [-1,T], \quad U_{2} = \conv \big( [1,N] \times R_{2} \times [-1,T] \ts , \; L \big)
, \quad U_{3} = M.
\end{equation*}
From properties (F3)--(F5), it is not hard to see that~\eqref{eq:2_quant_intermediate} is equivalent to:
\begin{equation*}
\ex x \in I \quad \for \z \in K \quad : \quad (x,\z) \in U_{1} \cup U_{2} \cup U_{3}.
\end{equation*}
\end{proof}




\bigskip

\section{Final remarks and open problems} \label{s:finrem}

\subsection{}  \label{ss:finrem-doignon}
It is sufficient to prove Theorem~\ref{th:Kannan} for the case when $m,n$ are also bounded.
In the system $A \, (\x,\y) \le b$, we view $\x$ as the parameters and $\y$ as the variables to be solved for.
For a fixed $d_{2}$ and $m\ge 2^{d_{2}}$, the \emph{Doignon--Bell--Scarf theorem}
\cite[$\S$16.5]{Schrijver} implies that the system $A \, (\x,\y) \le \b$
is solvable in $\y \in \Z^{d_{2}}$ if and only if every
subsystem $A' \, (\x,\y) \le \bprime$ is solvable.
Here $A'$ is a submatrix with $2^{d_{2}}$ rows from $A$ with $\bprime$ the corresponding subvector from~$\b$.
In other words:
\[
\ex \y \in \Z^{d_{2}} \quad A \, (\x,\y) \le \b \quad \iff \quad \bigwedge_{(A',\, \bprime)} \Big[ \ex \y \in \Z^{d_{2}} \quad A' \, (\x,\y) \le \bprime \Big].
\]
The total number of pairs $(A',\, \bprime)$ is $\displaystyle\binom{m}{2^{d_{2}}}$, which is polynomial in $m$.

Note that the conjunction over all $(A',\, \bprime)$ commutes with the universal quantifier $\;\for \x$.
Therefore:
\begin{equation*}
\for \x \in P \cap \Z^{d_{1}} \;\; \ex \y \in \Z^{d_{2}} \;\; A \, (\x,\y) \le \b \;
\iff
\bigwedge_{(A',\, \bprime)} \Big[ \for \x \in P \cap \Z^{d_{1}} \;\; \ex \y \in \Z^{d_{2}} \;\; A' \, (\x,\y) \le \bprime \Big].
\end{equation*}
Thus, it is equivalent to check each of the smaller subproblems, each of which has $m = 2^{d_{2}}$.
Recall that the number of facets in $P$ is $n$, which can still be large.
However, given the system $C \, \x \le \cj\ga$ describing $P$, we can triangulate $P$ into to a union of simplices $P_{1} \sqcup \dots \sqcup P_{k}$.
Since the dimension $d_{1}$ is bounded,
we can find such a triangulation in polynomial time (see e.g.~\cite{DRS}).
Now for each pair $(A',\, \bprime)$, we have:
\begin{equation*}
\for \x \in P \cap \Z^{d_{1}} \;\; \ex \y \in \Z^{d_{2}} \;\; A' \, (\x,\y) \le \bprime \;
\iff
\bigwedge_{i=1}^{k} \Big[ \for \x \in P_{i} \cap \Z^{d_{1}} \;\; \ex \y \in \Z^{d_{2}} \;\; A' \, (\x,\y) \le \bprime \Big].
\end{equation*}
Each simplex $P_{i} \subset \R^{d_{1}}$ has $d_{1}+1$ facets.
Each subsentence in the RHS now has $m=2^{d_{2}}$ and $d_{1}+1$.
Note that the total number of such subsentences is still polynomial,
so it suffices to check each of them individually.

\smallskip

For three quantifiers $\ex \x \; \for \y \; \ex \z$, this argument breaks down because the existential quantifier $\ex \x$ no longer commutes with a long conjunction.

\subsection{} \label{ss:BP}
By taking finite Boolean combinations, we see that Theorem~\ref{th:B}
also allows counting  integer points in a union of $k$ polytopes,
where $k$ is bounded (see~\cite{B3,BP}).  In fact, Woods proved in~\cite[Prop.\ 5.3.1]{W}
that it is still possible to count all such points in polynomial time when $k$
is arbitrary.
By Corollary~\ref{th:many_proj}, we see that this is not the case for projection.

\subsection{} \label{ss:GSA}
The \textsc{GSA} Problem plays an important role in both Number Theory and Integer Programming
especially in connection to lattice reduction algorithms (see e.g.~\cite{GLS}). Let us mention
that via a chain of parsimonious reductions one can show that \textsc{\#GSA} is also hard to
approximate (cf.~\cite{ER}).  Note also that \textsc{GSA} has been recently used in a somewhat related
geometric context in~\cite{EH}.

\subsection{} \label{ss:compression}
An easy consequence of Lemma~\ref{lem:compress} proves the first part
of the following result:

\begin{prop} \label{p:log}
Every set $S = \{\cj p_{1},\dots, \cj p_{r}\} \subset \zz^{2}$ is a
projection of integer points of some convex polytope $P \subset \rr^{2+d}$,
where \ts$d\le \lceil \log_{2} r \rceil$.  Moreover, the bound \ts
$d\le \lceil \log_{2} r \rceil$ \ts is tight.
\end{prop}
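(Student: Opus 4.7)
The first assertion is a direct application of Lemma~\ref{lem:compress}. I would take $n=2$ and the $r$ singleton polytopes $R_i = \{\cj p_i\} \subset \R^2$, each trivially representable by a small linear system. The lemma then produces a polytope $U \subset \R^{2+d}$ with $d = \lceil \log_2 r \rceil$ for which $\x \in \bigcup_i R_i \cap \Z^2 = S$ holds if and only if there is a $\t \in \Z^d$ with $(\x,\t) \in U \cap \Z^{2+d}$. This is exactly the statement that $S$ is the projection of $U \cap \Z^{2+d}$ onto the first two coordinates.

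For tightness, the plan is to exhibit a specific $r$-point configuration whose realization forces $d \ge \lceil \log_2 r \rceil$. A convenient choice is
\[
S \;=\; \bigl\{(2i,\, 2i^{2}) \,:\, i = 1,\dots,r \bigr\} \;\subset\; 2\Z^2.
\]
Two features make this set useful: lying in $2\Z^2$ guarantees that the midpoint of any two distinct points of $S$ is itself in $\Z^2$, and lying on the strictly convex arc $t \mapsto (2t,2t^2)$ guarantees that no such midpoint is in $S$ (a quick calculation shows that $\tfrac{1}{2}\bigl((2i,2i^2)+(2j,2j^2)\bigr) = (2k,2k^2)$ forces $i=j$).

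Now suppose $P \subset \R^{2+d}$ is a convex polytope whose integer points project onto $S$. For each $i$, I would select a lift $(\cj p_i, \v_i) \in P \cap \Z^{2+d}$ and consider the residues $\v_i \bmod 2 \in \{0,1\}^d$. If two distinct indices $i \ne j$ gave the same residue, then $\tfrac{1}{2}(\v_i + \v_j) \in \Z^d$, so the midpoint of $(\cj p_i,\v_i)$ and $(\cj p_j,\v_j)$ is an integer point of $P$ (by convexity) whose projection to $\R^2$ is $\tfrac{1}{2}(\cj p_i + \cj p_j) \in \Z^2 \setminus S$, contradicting the assumption on $P$. Hence the residues must be pairwise distinct, forcing $r \le 2^d$ and therefore $d \ge \lceil \log_2 r \rceil$.

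The only real delicacy is choosing $S$ so that both midpoint conditions (integrality of the midpoint and non-membership in $S$) hold simultaneously; once those are in place, the pigeonhole/parity argument is immediate. I do not anticipate any further technical obstacle.
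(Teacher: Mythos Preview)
Your proposal is correct and follows essentially the same approach as the paper: the first part invokes Lemma~\ref{lem:compress} on singleton polytopes, and the tightness argument uses the same parity/pigeonhole trick on the lifts $\v_i \bmod 2$ together with the midpoint-not-in-$S$ observation. The only cosmetic difference is that the paper phrases the tight example as ``any $r$ integer points in convex position with even coordinates'' while you exhibit the specific instance $(2i,2i^2)$, which is one such configuration.
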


We only use the proposition to reduce the dimension of variable $\bz$
in Theorem~\ref{th:main_1_restated} from $4$ to~$3$, but it is perhaps of independent interest.
Note that a weaker inequality $d\le r$ is trivial.

\begin{proof}[Proof of the second part of Proposition~\ref{p:log}]
Consider a set $S=\{\cj p_{1},\dots,\cj p_{r}\}$ of integer points in convex position
and with even coordinates.
Assume there is a polytope $P \subset \rr^{2+\ell}$ with $\ell < \lceil \log_{2} r \rceil$ so that $S$ is exactly the projection of $P \cap \zz^{2+\ell}$ on $\zz^{2}$.
Then there are integer points $\cj q_{1},\dots,\cj q_{r} \in \zz^{\ell}$ so that $(\cj p_{i}, \cj q_{i}) \in P$.
Since $r > 2^{\ell}$, by the pigeonhole principle, we have $\cj q_{i} - \cj q_{j} \in 2 \zz^{\ell}$ for some $i \neq j$.
Then the midpoint of  $(\cj p_{i}, \cj q_{i})$ and $(\cj p_{j}, \cj q_{j})$ is an integer point in $\zz^{2+\ell}$,
which also lies in $P$ by convexity.
The projection of this midpoint on $\zz^{2}$ is $(\cj p_{i} + \cj p_{j})/2$, which must lie in~$S$.
However, the points in $S$ are in convex positions and thus contain no midpoints, a contradiction.
\end{proof}


\subsection{}  \label{ss:finrem-diff}
Let us give another motivation behind Theorem~\ref{th:main_3} and put it into context
of our other work.  In this paper, we bypass the ``short generating function'' technology
developed for computing $|\rE_1(P)|$ for convex polytopes $P\ssu \rr^d$.  Note, however, that for $X=Q \cpl P$ as in
the theorem, the corresponding short GF $f_X(\t)$ is simply the difference $f_Q(\t)-f_P(\t)$, which can still be computed in polynomial time (see~\cite{B1}).
Thus,
if one could efficiently present the projection of $f_X(\t)$ on $\Z$ as a short generating function of polynomial size, then one would be able to compute
$|\rE_1(Q\cpl P)|$, a contradiction.  In other words, Theorem~\ref{th:main_3} is an extension of a result by Woods~\cite{Woods}, which shows that computing projecting short generating functions is $\NP$-hard.  It is also an effective but weaker version of the main
result in~\cite[Th.~1.3]{squares}, which deals with the size of short GFs of the projections rather than
complexity of their computation.

\subsection{}  \label{ss:many_proj}
Corollary~\ref{th:many_proj} says that computing $|\rE_{1}(T_{1} \cup\dots\cup T_{k})|$ is $\sharpP$-complete
even for simplices $T_{i} \subset \rr^{3}$.
By a stronger version of Theorem~\ref{th:BW} (see~\cite{BW}), for each polytope~$T_{i}$,
there is a short generating function $g_{i}(t)$ representing $\rE_{1}(T_{i})$.
The union of all those generating functions correspond to $\rE_{1}(T_{1} \cup\dots\cup T_{k})$.
As a corollary we conclude that the union operation on short generating functions is $\sharpP$-hard to compute.
As in $\S$\ref{ss:finrem-diff} above, one should compare this to a stronger result~\cite[Th.~1.1]{squares},
which says that the union of short generating functions can actually have super-polynomial lengths unless \ts
$\SP \subseteq \FPPo$.

\subsection{}  \label{ss:finrem-diff-more}
Dimension $3$ in Theorem~\ref{th:main_3} is optimal.
Indeed, assume $P,Q \subset \rr^{2}$.
Then one can decompose $Q \cpl P = R_{1} \cup \dots \cup R_{r}$, where each $R_{i}$ is a polygon, so that the projection $\rE_{1}(R_{i})$ onto the $x$-axis of each $R_{i}$ intersects at most one other $\rE_{1}(R_{j})$.
This can easily be done by drawing vertical lines through vertices of $P$, which together with $\partial P$ will divide $Q \cpl P$ into $R_{1},\dots,R_{r}$.
By Theorem~\cite{BW}, we can find a generating function $g_{i}(t)$ for each $\rE_{1}(R_{i})$ in polynomial time.
From Corollary 3.7 in~\cite{BW}, the union $g(t)$ of all $g_{i}(t)$ can also be found in polynomial time, because each of them intersects at most one another in support.
Evaluating $g(1)$, we get the count for $|\rE_{1}(Q \cpl P)|$.



\subsection{} \label{ss:finrem-dim}
Note that Theorem~\ref{th:main_1_restated} was proved for dimensions $d_{1} = 1, d_{2} = 2$ and $d_{3} = 3$.
One can ask if the problem still remains $\NP$-complete when some of these dimensions are lowered.
In particular, it would be interesting to see if the following problem is still $\NP$-complete:
\begin{equation*}
\ex x \in P \cap \zz \quad \for \y \in Q \cap \zz^2 \quad \ex \z \in \Z^{2} \quad : \quad (x,\y,\z) \in U,
\end{equation*}
where $P \subset \R,\, Q \subset \R^{2}$ and $U \subset \R^{5}$ are convex polytopes.

\vskip.56cm

\subsection*{Acknowledgements}
We are grateful to Iskander Aliev, Matthias Aschenbrenner,
Sasha Barvinok, Matt Beck, Art\"{e}m Chernikov, Jes\'{u}s De Loera,
Matthias K\"oppe, Sinai Robins and Kevin Woods for interesting conversations and helpful remarks.
The second author was partially supported by the~NSF.


\newpage

{\footnotesize

}\end{document}